\theoremstyle{plain}
\newtheorem{theorem}                 {Theorem}      [section]
\newtheorem{conjecture}   [theorem]  {Conjecture}
\newtheorem{proposition}  [theorem]  {Proposition}
\theoremstyle{definition}
\newtheorem{definition}   [theorem]  {Definition}
\numberwithin{equation}{section}
\def \H{{\mathbb H}}
\def \rn{{\mathbb R}}
\def \F{\mathcal F}
\def \H{\mathcal H}
\def \V{\mathcal V}
\def\nab#1#2{\hbox{$\nabla$\kern -.3em\lower 1.0 ex
		\hbox{$#1$}\kern -.1 em {$#2$}}}
\def \g{\mathfrak{g}}
\def \k{\mathfrak{k}}
\def \m{\mathfrak{m}}
\def \SLR#1{\text{\bf SL}_{#1}(\rn)}
\def \SL2{\widetilde{\text{\bf SL}}_{2}(\rn)}
\def \slr#1{\mathfrak{sl}_{#1}(\rn)}
\def \SO#1{\text{\bf SO}(#1)}
\def \so#1{\mathfrak{so}(#1)}
\def \SU#1{\text{\bf SU}(#1)}
\def \su#1{\mathfrak{su}(#1)}
\def\jid(#1#2#3){\left[\left[#1,#2\right],#3\right] + \left[\left[#3,#1\right],#2\right] + \left[\left[#2,#3\right],#1\right]} 
\def\ss#1#2{_{#1#2}}
\DeclareMathOperator{\trace}{trace}
\numberwithin{equation}{section}
\def\ss#1#2{_{#1#2}}
\def\jid(#1#2#3){\left[\left[#1,#2\right],#3\right] + \left[\left[#3,#1\right],#2\right] + \left[\left[#2,#3\right],#1\right]} 
\def\BV(#1#2){B^{\V}(#1,#2)}
\def\hproj(#1#2){\frac{1}{2}\,\H\,(\nab(#1#2) + \nab(#2#1))}
\def\BH(#1#2){B^{\H}(#1,#2)}
\def\vproj(#1#2){\frac{1}{2}\,\V\,(\nab(#1#2) + \nab(#2#1))}
\newcommand{\scalar}[2]{\langle #1,#2 \rangle}
\def\lieb(#1#2){\left[#1,#2\right]}
\begin{document}

\title[Conformal foliations on Lie groups]{Conformal foliations on Lie groups\\ and complex-valued harmonic morphisms}


\author{Elsa Ghandour}
\address{Mathematics, Faculty of Science\\
University of Lund\\
Box 118, Lund 221 00\\
Sweden}
\email{Elsa.Ghandour@math.lu.se}

\author{Sigmundur Gudmundsson}
\address{Mathematics, Faculty of Science\\
	University of Lund\\
	Box 118, Lund 221 00\\
	Sweden}
\email{Sigmundur.Gudmundsson@math.lu.se}

\author{Thomas B. Turner}
\address{Mathematics, Faculty of Science\\
	University of Lund\\
	Box 118, Lund 221 00\\
	Sweden}
\email{Thomas.Benjamin.Turner@gmail.com}

\begin{abstract}
We study left-invariant foliations $\F$ on Riemannian Lie groups $G$ generated by a subgroup $K$. We are interested in such foliations which are conformal and with minimal leaves of codimension two.  We classify  such foliations $\F$ when the subgroup $K$ is one of the important $\SU2\times\SU2$, $\SU2\times\SLR2$, $\SU2\times\SO2$ or $\SLR2\times\SO2$.  By this we yield new multi-dimensional families of Lie groups $G$ carrying such foliations in each case.  These   foliations $\F$ produce local complex-valued harmonic morphisms on the corresponding Lie group $G$.
\end{abstract}

\subjclass[2010]{31B30, 53C43, 58E20}

\keywords{conformal foliations, harmonic morphisms}

\maketitle

\section{Introduction}\label{section-introduction}

We study Riemannian Lie groups $G$ equipped with a conformal foliation $\F$ generated by the left-translations of a subgroup $K$ of codimension two.  We are interested in such foliations with minimal leaves and hence inducing local complex-valued harmonic morphisms.
\smallskip 

Our principal aim is to investigate the validity of Conjecture \ref{conjecture-minimal-0} and show that it holds in the four important cases when $K$ is one of the Lie subgroups  $\SU2\times\SU 2$, $\SU 2\times\SLR 2$, $\SO 2\times\SU 2$ or $\SO 2\times\SLR 2$. In each case, we yield  multi-dimensional families of Riemannian Lie groups $G$ solving this interesting geometric problem i.e. carrying conformal foliations with minimal leaves of codimension two.

\begin{conjecture}\label{conjecture-minimal-0}
	Let $K$ be a subgroup of the Lie group $G$ generating a left-invariant conformal foliation $\F$ of $G$ of codimension two. If $K$ is semisimple then the foliation $\F$ is minimal.  If $K$ is semisimple and compact then $\F$ is totally geodesic.
\end{conjecture}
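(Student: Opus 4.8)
The plan is to push everything down to the Lie algebra $\g$ of $G$ and then to exploit two elementary properties of a semisimple Lie algebra: all its derivations are inner, and it is unimodular. By left-invariance it suffices to work at the identity. Write $\g=\k\oplus\m$, where $\k=\mathrm{Lie}(K)$ and $\m=\k^{\perp}$ is its orthogonal complement for the metric, so $\dim_{\rn}\m=2$ and $P_{\m},P_{\k}$ are orthogonal projections. Using the Koszul formula on left-invariant vector fields, one records three tensors: for $V\in\k$ put $\sigma(V)=P_{\m}\,\ad_{V}|_{\m}\colon\m\to\m$, and for $Z\in\m$ put $\beta(Z)=P_{\k}\,\ad_{Z}|_{\k}\colon\k\to\k$. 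A short computation gives: the mean curvature vector $H$ of the leaves satisfies $\langle H,Z\rangle=\trace\beta(Z)$; the second fundamental form $B$ of the leaves satisfies $2\langle B(X,Y),Z\rangle=\langle(\beta(Z)+\beta(Z)^{\ast})X,Y\rangle$ for $X,Y\in\k$; and $\F$ is conformal precisely when $\sigma(V)+\sigma(V)^{\ast}$ is a scalar multiple of $\mathrm{id}_{\m}$ for every $V\in\k$, i.e. when $\sigma(V)$ lies in the $2$-dimensional conformal subalgebra $\so{2}\oplus\rn\,\mathrm{id}\cong\cn$ of $\mathfrak{gl}(\m)$ (here $(\cdot)^{\ast}$ is the adjoint for the relevant restricted inner product). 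So the three assertions become: $\F$ minimal $\iff\trace\beta(Z)=0$ for all $Z$; $\F$ totally geodesic $\iff\beta(Z)$ skew for all $Z$; $\F$ conformal $\iff\sigma(\k)$ lies in that conformal subalgebra.

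The crucial point is that, because $\k$ is a subalgebra, each $\ad_{V}$ $(V\in\k)$ is block upper-triangular for $\g=\k\oplus\m$, so $\sigma\colon\k\to\mathfrak{gl}(\m)$ is a genuine Lie algebra representation. The conformal subalgebra $\so{2}\oplus\rn\,\mathrm{id}$ is abelian, so if $\F$ is conformal then $\sigma$ kills $[\k,\k]$; as $\k$ is semisimple, hence perfect, this forces $\sigma\equiv 0$. Equivalently $[\k,\m]\subseteq\k$, and together with $[\k,\k]\subseteq\k$ this says precisely that $\k$ is an \emph{ideal} of $\g$. This is the structural payoff of the hypotheses, and everything else should follow from it.

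Minimality is then immediate: since $\k$ is an ideal, $\beta(Z)=\ad_{Z}|_{\k}$ is a derivation of the semisimple algebra $\k$, hence inner, say $\beta(Z)=\ad^{\k}_{\psi(Z)}$ with $\psi(Z)\in\k$ (unique, as $\k$ is centreless), and $\trace\ad^{\k}_{W}=0$ for every $W\in\k$ because a semisimple Lie algebra is unimodular. Thus $\trace\beta(Z)=0$ for all $Z\in\m$, so $\F$ is minimal. That $\m$ may fail to be a subalgebra — i.e. the horizontal distribution need not be integrable — is irrelevant here, since minimality only sees the $\beta$-part of the brackets.

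For the compact case one must upgrade ``trace-free'' to ``skew'': $\F$ is totally geodesic iff each $\beta(Z)=\ad^{\k}_{\psi(Z)}$ is skew-symmetric for $\langle\,,\,\rangle|_{\k}$, i.e. iff that inner product is invariant under the one-parameter subgroups of $K$ generated by $\psi(\m)\subseteq\k$. The natural strategy is to deduce from compactness of $K$ that $\langle\,,\,\rangle|_{\k}$ is in fact bi-invariant — equivalently, that the horizontal space $\m$ coincides with the centralizer $\z_{\g}(\k)$, which by the splitting above is a canonical $2$-dimensional complement realising $\g=\k\times\z_{\g}(\k)$ — for instance by an averaging argument over the compact group $K$, carried out so as to be compatible with left-invariance of the ambient metric on $G$. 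I expect this last step to be the main obstacle; in the four explicit families studied in the paper it can instead be verified by hand once the conformality and codimension-two conditions have pinned down the structure constants.
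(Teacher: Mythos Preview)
The statement is a \emph{conjecture}; the paper does not prove it in general but only verifies it case by case for the two semisimple subgroups $K=\SU 2\times\SU 2$ and $K=\SU 2\times\SLR 2$, each time by writing out all structure constants, reducing them via the Jacobi identity, and then computing $B^\V$ directly---always under the standing assumption that the metric restricted to $\k$ is the standard bi-invariant one. Your argument for the \emph{minimality} half is correct and complete in full generality, hence strictly stronger than what the paper does: the key insight that conformality in codimension two forces the representation $\sigma\colon\k\to\mathfrak{gl}(\m)$ to land in the abelian algebra $\so 2\oplus\rn\,\mathrm{id}$, whence $\sigma\equiv 0$ on a perfect $\k$, makes $\k$ an ideal; then $\beta(Z)=\ad_Z|_{\k}$ is an inner derivation of the semisimple $\k$ and so has vanishing trace. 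This settles the first half of the conjecture outright, whereas the paper only checks instances.

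The totally-geodesic half, however, has a genuine gap, and the route you propose cannot be completed. You write that one should deduce from compactness of $K$ either that $\langle\,,\,\rangle|_{\k}$ is bi-invariant or, ``equivalently'', that $\m=\z_{\g}(\k)$; but these two conditions are not equivalent to one another, and neither is forced by the hypotheses. A concrete obstruction: take $\g=\su 2\times\rn^2$ with the standard brackets on $\su 2=\mathrm{span}\{A,B,C\}$ and $\rn^2$ central; choose the left-invariant metric with $A,B,C$ orthogonal but $\|B\|\neq\|C\|$, and with $\langle A,X'\rangle\neq 0$ for some $X'\in\rn^2$. Then $K=\SU 2$ is compact, semisimple, of codimension two, and one checks $\sigma\equiv 0$, so $\F$ is Riemannian (hence conformal); yet $\m\neq\z_{\g}(\k)$, and for a suitable $Z\in\m$ the map $\beta(Z)$ is a nonzero multiple of $\ad^{\k}_A$, which is not skew for this inner product, so $B^\V\neq 0$. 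Thus the second half of the conjecture fails without an extra hypothesis such as bi-invariance of the induced metric on $\k$---precisely the assumption the paper imposes in its explicit families.
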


On this journey we also prove the following result.

\begin{theorem}\label{theorem-semisimple-0}
	Let $K$ be a semisimple Lie subgroup of $G$ of codimension two and $\F$ be the left-invariant foliation on $G$ generated by $K$.  Then $\F$ is Riemannian.
\end{theorem}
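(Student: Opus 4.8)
The plan is to translate the bundle-like (Riemannian) condition into a purely Lie-algebraic statement about $\k$ and its orthogonal complement, and then to show that semisimplicity of $\k$ forces the conformal factor of $\F$ to vanish. Write $\g=\k\oplus\m$ for the orthogonal splitting determined by the left-invariant metric, where $\m=\k^\perp$ has $\dim\m=2$; the vertical distribution $\V$ is the left-translation of $\k$ and the horizontal distribution $\H$ that of $\m$. For left-invariant fields the Koszul formula carries no derivative terms, so a direct computation gives, for $X,Y\in\m$ and $V\in\k$, the identity $\scalar{\nabla_XV}{Y}+\scalar{\nabla_YV}{X}=\scalar{[X,V]}{Y}+\scalar{[Y,V]}{X}$. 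Since $\F$ is Riemannian exactly when this symmetric expression vanishes for all such $X,Y,V$, the problem reduces to showing that the compressed operator $D_V:=\pi_\m\circ\ad_V|_\m\colon\m\to\m$ (with $\pi_\m$ the orthogonal projection onto $\m$) is skew-symmetric with respect to $\scalar{\cdot}{\cdot}|_\m$ for every $V\in\k$; equivalently, that $B^{\V}\equiv0$.

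First I would record two structural facts. Because $\k$ is a subalgebra, $[\k,\k]\subseteq\k$, so relative to $\g=\k\oplus\m$ each $\ad_V$ with $V\in\k$ is block upper-triangular; in particular $\trace_\g\ad_V=\trace_\k(\ad_V|_{\k})+\trace_\m D_V$. Semisimplicity gives $\k=[\k,\k]$, so every $V\in\k$ is a sum of brackets of elements of $\k\subseteq\g$; as the trace of a commutator of endomorphisms is zero, both $\trace_\g\ad_V$ and $\trace_\k(\ad_V|_{\k})$ vanish, whence $\trace_\m D_V=0$ for all $V\in\k$. I would also note, using the Jacobi identity together with $[\k,\k]\subseteq\k$ (so that the $\k$-valued corrections are annihilated by $\pi_\m$), that $V\mapsto D_V$ is a homomorphism of $\k$ into $\mathfrak{sl}(\m)\cong\slr{2}$, with semisimple image; this reappears in the closing remark.

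Now I would invoke that $\F$ is a conformal foliation, as it is throughout the paper (Introduction). Conformality means precisely that the leaf-flow preserves the conformal class of the horizontal metric, which in this left-invariant setting says that the symmetric part of each $D_V$ is a scalar operator: $\tfrac12\bigl(D_V+D_V^{\ast}\bigr)=\mu(V)\,\mathrm{id}_\m$ for some $\mu(V)\in\rn$, where $D_V^{\ast}$ is the metric adjoint on $\m$ (equivalently, $B^{\V}(X,Y)=\scalar{X}{Y}\,H$ for a vertical field $H$). Taking the trace and using $\trace_\m D_V=0$ forces $2\mu(V)=\trace_\m D_V=0$, so $\mu(V)=0$ for every $V\in\k$. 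Hence the symmetric part of each $D_V$ vanishes, i.e. $D_V$ is skew-symmetric, which by the first paragraph is exactly the assertion that $\F$ is Riemannian. The engine of the argument, and the step I expect to need the most care to phrase cleanly inside the $B^{\V}$-formalism, is the identification of the conformal factor $\mu(V)$ with the normalized trace of $D_V$, together with the vanishing of that trace forced by $\k=[\k,\k]$.

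Finally, as a remark illustrating the rigidity behind the result, once each $D_V$ is known to be skew-symmetric its image lies in $\so{2}$, which is abelian; since $V\mapsto D_V$ is a homomorphism out of the semisimple algebra $\k$, it must then be identically zero. Thus $D\equiv0$, that is $[\k,\m]\subseteq\k$, so $\k$ is in fact an ideal of $\g$ — a conclusion strictly stronger than the Riemannian property and one that I expect to be useful when approaching the minimality and totally geodesic statements of Conjecture \ref{conjecture-minimal-0}.
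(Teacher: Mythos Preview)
Your argument is correct, modulo two slips of the pen: in the first paragraph ``equivalently, that $B^{\V}\equiv 0$'' and in the third ``$B^{\V}(X,Y)=\scalar{X}{Y}H$'' should both read $B^{\H}$, since it is the horizontal second fundamental form that governs the conformal/Riemannian dichotomy for $\F$.

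The paper takes a different and much terser route: it invokes an external result (Remark~3.2 of \cite{Gud-Sve-6}) to pass directly from $\k=[\k,\k]$ to the conclusion that the adjoint action of $\V$ has no $\H$-component, i.e.\ $[\k,\m]\subseteq\k$, from which the Riemannian property is immediate. Your approach is self-contained and proceeds in the opposite order: the block-triangular shape of $\ad_V$ on $\g=\k\oplus\m$ yields $\trace_\m D_V=0$; conformality forces the symmetric part of $D_V$ to be scalar and hence zero, giving the Riemannian conclusion; and only then does your closing remark recover $[\k,\m]\subseteq\k$ (the paper's starting point) via the observation that a semisimple algebra admits no nontrivial homomorphism into the abelian $\so 2$. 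What your route buys is transparency---no black-box citation---and a clean identification of where conformality enters. That last point deserves emphasis: conformality is not written into the theorem's hypotheses, and without it the statement actually fails (take $\g=\slr{2}\ltimes\rn^2$ with the standard two-dimensional representation, $\k=\slr{2}$, $\m=\rn^2$; then $D_H=\mathrm{diag}(1,-1)$ is not skew). Your explicit invocation of the paper's standing conformality assumption is therefore a clarification rather than a gap, and presumably the cited remark carries the same hypothesis.
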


\section{Preliminaries}\label{section-preliminaries}

Let $M$ and $N$ be two manifolds of dimensions $m$ and $n$, respectively. A Riemannian metric $g$ on $M$ gives rise to the notion of a {\it Laplacian} on $(M,g)$ and real-valued {\it harmonic functions} $f:(M,g)\to\rn$. This can be generalised to the concept of {\it harmonic maps} $\phi:(M,g)\to (N,h)$ between Riemannian manifolds, which are solutions to a semi-linear system of partial differential equations, see \cite{Bai-Woo-book}.

\begin{definition}
A map $\phi:(M,g)\to (N,h)$ between Riemannian manifolds is called a {\it harmonic morphism} if, for any harmonic function $f:U\to\rn$ defined on an open subset $U$ of $N$ with $\phi^{-1}(U)$ non-empty, the composition $f\circ\phi:\phi^{-1}(U)\to\rn$ is harmonic.
\end{definition}

The following characterisation of harmonic morphisms between
Riemannian manifolds is due to B. Fuglede and T. Ishihara.  For the
definition of horizontal (weak) conformality we refer to
\cite{Bai-Woo-book}.

\begin{theorem}\cite{Fug,Ish}
	A map $\phi:(M,g)\to (N,h)$ between Riemannian manifolds is a
	harmonic morphism if and only if it is a horizontally (weakly)
	conformal harmonic map.
\end{theorem}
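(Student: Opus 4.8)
The plan is to derive both implications from a single second-order \emph{composition law} for the Laplacian along $\phi$. First I would establish the formula, valid for any smooth $\phi:(M,g)\to(N,h)$ and any $f\in C^\infty(N)$: with $\{e_i\}_{i=1}^m$ a local orthonormal frame on $M$,
\[
\tau(f\circ\phi)=\sum_{i=1}^m(\nabla df)\bigl(d\phi(e_i),d\phi(e_i)\bigr)+df\bigl(\tau(\phi)\bigr),
\]
where $\nabla df$ is the Hessian of $f$ and $\tau(\phi)=\trace_g\nabla d\phi$ is the tension field of $\phi$, the Laplacian being identified with the trace of the Hessian. This is a routine consequence of the chain rule for second covariant derivatives. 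I would then encode the first sum through the symmetric endomorphism $d\phi_p\circ d\phi_p^{*}$ of $T_{\phi(p)}N$, equivalently the contravariant tensor $A_p=\sum_i d\phi(e_i)\otimes d\phi(e_i)$, so that the first term reads $\langle\nabla df,A_p\rangle$.

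The sufficiency direction is then immediate. If $\phi$ is horizontally weakly conformal then at each $p$ one has $d\phi_p\circ d\phi_p^{*}=\lambda^2(p)\,\mathrm{Id}$ on $T_{\phi(p)}N$, i.e. $A_p=\lambda^2(p)\,h^{-1}$; and if moreover $\phi$ is harmonic then $\tau(\phi)=0$. For any locally defined harmonic $f$ on $N$ the composition law then gives $\tau(f\circ\phi)=\lambda^2\,\trace_h(\nabla df)+0=\lambda^2\,\Delta^N f=0$, so $f\circ\phi$ is harmonic and $\phi$ is a harmonic morphism.

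For necessity I would exploit the freedom to prescribe the $2$-jet of a harmonic function at a point. Fix $p\in M$ and set $q=\phi(p)$. The key analytic input is that for every covector $\alpha\in T_q^{*}N$ and every symmetric $S\in\odot^2 T_q^{*}N$ with $\trace_h S=0$ there exists a function $f$ harmonic near $q$ with $df_q=\alpha$ and $(\nabla df)_q=S$. Assuming $\phi$ is a harmonic morphism, $\tau(f\circ\phi)=0$ for all such $f$, so by the composition law $\langle S,A_p\rangle+\alpha\bigl(\tau(\phi)_p\bigr)=0$ for all admissible $(\alpha,S)$. Taking $S=0$ and letting $\alpha$ range over $T_q^{*}N$ forces $\tau(\phi)_p=0$, so $\phi$ is harmonic; taking $\alpha=0$ and letting $S$ range over all trace-free symmetric tensors forces the trace-free part of $A_p$ to vanish, i.e. $A_p=\mu\,h^{-1}$ with $\mu=\tfrac1n\trace_h A_p\ge 0$. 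Writing $\mu=\lambda^2$ this reads $d\phi_p\circ d\phi_p^{*}=\lambda^2\,\mathrm{Id}$, which is exactly horizontal weak conformality. Combining the two conclusions gives the claim.

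I expect the main obstacle to be precisely the analytic input above: producing genuine harmonic functions on $(N,h)$ realising an arbitrary $1$-jet together with an arbitrary trace-free Hessian at the prescribed point $q$. I would handle this in geodesic normal coordinates centred at $q$, where the Laplacian of $(N,h)$ is a perturbation of the Euclidean Laplacian by terms vanishing at $q$; the Euclidean model is solved by the explicit quadratic $f_0(x)=\alpha\cdot x+\tfrac12 x^{\top}Sx$ with $S$ symmetric trace-free, and one then corrects $f_0$ to an exact solution of $\Delta^N f=0$ without disturbing its $2$-jet at $q$ by a standard local solvability argument for the elliptic operator $\Delta^N$. This is the delicate step carried out by Fuglede and Ishihara, and everything else reduces to the pointwise algebra above.
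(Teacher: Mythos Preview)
Your outline is correct and is essentially the classical argument of Fuglede and Ishihara: the composition law for the Laplacian, the sufficiency from $d\phi\circ d\phi^*=\lambda^2\,\mathrm{Id}$ and $\tau(\phi)=0$, and the necessity by testing against harmonic functions with prescribed $1$-jet and prescribed trace-free Hessian at a point. The delicate step you flag---producing such harmonic germs---is indeed the heart of the matter, and your plan (normal coordinates, quadratic model, elliptic correction not disturbing the $2$-jet) is the standard way through.

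Note, however, that the paper does not supply its own proof of this theorem: it is quoted as a known result with a citation to \cite{Fug,Ish}, and the paper moves on. So there is nothing to compare your argument against here; you have simply reproduced (correctly) the proof the paper is content to cite.
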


Let $(M,g)$ be a Riemannian manifold, $\V$ be an integrable distribution on $M$ and denote by $\H$ its orthogonal complement distribution on $M$.
As customary, we also use $\V$ and $\H$ to denote the orthogonal projections onto the corresponding subbundles of $TM$
and denote by $\F$ the foliation tangent to $\V$. Then the second fundamental form for $\V$ is given by
$$B^\V(E,F)=\frac 12\,\H(\nabla_EF+\nabla_FE)=\H(\nabla_EF)\qquad(E,F\in\V),$$
while the second fundamental form for $\H$ satisfies 
$$B^\H(X,Y)=\frac{1}{2}\,\V(\nabla_XY+\nabla_YX)\qquad(X,Y\in\H).$$
The foliation $\F$ tangent to $\V$ is said to be {\it conformal} if there is a
vector field $V\in \V$ such that $$B^\H=g\otimes V,$$ and
$\F$ is said to be {\it Riemannian} if $V=0$.
Furthermore, $\F$ is said to be {\it minimal} if $\text{trace}\ B^\V=0$ and
{\it totally geodesic} if $B^\V=0$. This is equivalent to the
leaves of $\F$ being minimal and totally geodesic submanifolds
of $M$, respectively.

It is well-known that the fibres of a horizontally conformal
map (resp.\ Riemannian submersion) give rise to a conformal foliation
(resp.\ Riemannian foliation). Conversely, the leaves of any
conformal foliation (resp.\ Riemannian foliation) are
locally the fibres of a horizontally conformal map
(resp.\ Riemannian submersion), see \cite{Bai-Woo-book}.

The next result of Baird and Eells gives the theory of
harmonic morphisms, with values in a surface,
a strong geometric flavour.

\begin{theorem}\cite{Bai-Eel}\label{theo:B-E}
	Let $\phi:(M^m,g)\to (N^2,h)$ be a horizontally conformal
	submersion from a Riemannian manifold to a surface. Then $\phi$ is
	harmonic if and only if $\phi$ has minimal fibres.
\end{theorem}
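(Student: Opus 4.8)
The plan is to compute the tension field $\tau(\phi)=\trace\,\nabla d\phi$ explicitly and read off exactly when it vanishes, using that a submersion is harmonic precisely when $\tau(\phi)=0$. Recall that horizontal conformality provides a \emph{dilation} $\lambda:M\to(0,\infty)$ with $h(d\phi(X),d\phi(Y))=\lambda^2\,g(X,Y)$ for all horizontal $X,Y\in\H$. I would fix a point $p\in M$ and an adapted local orthonormal frame, splitting it into a horizontal part $X_1,\dots,X_n$ (here $n=\dim N=2$) and a vertical part $V_1,\dots,V_{m-n}$ tangent to $\F$, and then write
$\tau(\phi)=\sum_i(\nabla d\phi)(X_i,X_i)+\sum_\alpha(\nabla d\phi)(V_\alpha,V_\alpha)$,
treating the two sums separately.

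The vertical sum is the easy, dimension-free part. Since $d\phi$ annihilates vertical vectors, each term reduces via $(\nabla d\phi)(V_\alpha,V_\alpha)=\nabla^\phi_{V_\alpha}(d\phi(V_\alpha))-d\phi(\nabla_{V_\alpha}V_\alpha)$ to $-d\phi(\nabla_{V_\alpha}V_\alpha)$, and only the horizontal component of $\nabla_{V_\alpha}V_\alpha$ survives under $d\phi$. By the very definition of $B^{\V}$ this yields $\sum_\alpha(\nabla d\phi)(V_\alpha,V_\alpha)=-d\phi(\trace\,B^{\V})$, namely the image of the mean curvature of the leaves of $\F$.

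The horizontal sum is the heart of the matter, and this is where the hypothesis $\dim N=2$ enters. I would extract it by differentiating the conformality relation $h(d\phi(X_i),d\phi(X_j))=\lambda^2\delta_{ij}$ along the frame directions, substituting $\nabla^\phi_{X_i}d\phi(X_j)=(\nabla d\phi)(X_i,X_j)+d\phi(\nabla_{X_i}X_j)$, and exploiting the symmetry of $\nabla d\phi$ to rewrite $\nabla^\phi_{X_i}d\phi(X_k)=\nabla^\phi_{X_k}d\phi(X_i)+d\phi([X_i,X_k])$. Pairing the resulting identity with $d\phi(X_k)$ and summing over $i$, the terms involving the horizontal mean curvature $\sum_i\H(\nabla_{X_i}X_i)$ cancel, leaving $h(\sum_i(\nabla d\phi)(X_i,X_i),d\phi(X_k))=(2-n)\,\lambda^2\,X_k(\log\lambda)$ for every $k$. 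Since $d\phi$ restricts to a conformal isomorphism $\H_p\to T_{\phi(p)}N$, this determines the horizontal sum as $(2-n)\,d\phi(\grad_\H\log\lambda)$. I expect this cancellation to be the main obstacle: it requires careful bookkeeping of the Levi-Civita terms, and it is precisely the factor $(2-n)$ that forces the whole horizontal contribution to vanish when $N$ is a surface.

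Combining the two computations at $n=2$ gives $\tau(\phi)=-d\phi(\trace\,B^{\V})$. Finally, because $\phi$ is a submersion, $d\phi_p$ is injective on the horizontal space $\H_p$, while $\trace\,B^{\V}$ is a horizontal vector field; hence $d\phi(\trace\,B^{\V})=0$ if and only if $\trace\,B^{\V}=0$. Therefore $\phi$ is harmonic, i.e.\ $\tau(\phi)=0$, if and only if $\trace\,B^{\V}=0$, which is exactly the condition that the fibres of $\phi$ be minimal. This completes the argument.
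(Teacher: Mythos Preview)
The paper does not actually prove this theorem: it is quoted as a background result from Baird and Eells \cite{Bai-Eel} and stated without proof, so there is nothing to compare your argument against here. That said, your proposal is correct and is precisely the classical Baird--Eells computation: split $\tau(\phi)$ along an adapted orthonormal frame, identify the vertical block with $-d\phi(\trace B^{\V})$, and show the horizontal block equals $(2-n)\,d\phi(\grad_{\H}\log\lambda)$, which vanishes for $n=2$. The only step I would tighten is the ``cancellation'' in the horizontal block: make explicit that differentiating $h(d\phi(X_i),d\phi(X_j))=\lambda^{2}\delta_{ij}$ and contracting yields, after using symmetry of $\nabla d\phi$, exactly $(2-n)\lambda^{2}X_k(\log\lambda)$ with the $\H(\nabla_{X_i}X_i)$ terms dropping out because $d\phi$ restricted to $\H$ is a conformal isomorphism; as written this is asserted rather than shown. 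Your final step, using injectivity of $d\phi|_{\H}$ to pass from $d\phi(\trace B^{\V})=0$ to $\trace B^{\V}=0$, is clean and correct.
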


\section{Left-invariant foliations of codimension $2$}

Let $(G,g)$ be a Lie group equipped with a left-invariant Riemannian metric $g$ and $K$ be a subgroup of $G$.  Let $\k$ and $\g$ be the Lie algebras of $K$ and $G$, respectively.  Let $\m$ be the orthogonal complement of $\k$ in $\g$ with respect to the Riemannian metric $g$ on $G$.  By $\V$ we denote the integrable distribution generated by $\k$ and by $\H$ its  orthogonal distribution given by $\m$.  Further let $\F$ be the foliation of $G$ induced by $\V$.  For this situation we have the following result.

\begin{theorem}\label{theorem-semisimple}
Let $K$ be a semisimple Lie subgroup of $G$ of codimension two and $\F$ be the left-invariant foliation on $G$ generated by $K$.  Then $\F$ is Riemannian.
\end{theorem}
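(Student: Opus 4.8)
The plan is to show that the conformality vector field $V \in \V$ for the horizontal distribution vanishes, which is exactly the condition for $\F$ to be Riemannian. Since everything is left-invariant, it suffices to work at the identity, i.e. at the level of the Lie algebras $\g = \k \oplus \m$ with $\dim\m = 2$, and to compute $B^{\H}$ on the two-dimensional space $\m$ using the Koszul formula for the Levi-Civita connection of the left-invariant metric. Write $\nabla$ for this connection; for left-invariant vector fields the Koszul formula reads $2\,\scalar{\nabla_XY}{Z} = \scalar{[X,Y]}{Z} - \scalar{[Y,Z]}{X} + \scalar{[Z,X]}{Y}$. I would fix an orthonormal basis $\{X,Y\}$ of $\m$ and express the conformality condition $B^{\H}(A,B) = \scalar{A}{B}\,V$ for $A,B \in \m$; evaluating on the pairs $(X,X)$, $(Y,Y)$, $(X,Y)$ shows that conformality is equivalent to $\V(\nabla_XX) = \V(\nabla_YY)$ together with $\V(\nabla_XY + \nabla_YX) = 0$, and in that case $V = \V(\nabla_XX)$.

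Next I would pair $V$ against an arbitrary $W \in \k$ and use the Koszul formula to rewrite $\scalar{V}{W}$ in terms of brackets. Concretely, $2\,\scalar{\V(\nabla_XX)}{W} = 2\,\scalar{\nabla_XX}{W} = -2\,\scalar{[X,W]}{X} = -\scalar{\,\ad_W\,}{}$-type expression; the key point is that $\scalar{V}{W}$ becomes a sum of terms of the form $\scalar{[W,X]}{X}$ and $\scalar{[W,Y]}{Y}$, i.e. (up to sign) the trace of the symmetric part of the restriction to $\m$ of $\ad_W$ composed with the $\m$-projection. So $2\,\scalar{V}{W} = -\trace\big(\m \circ \ad_W|_{\m}\big)$ after symmetrising, where I have used the conformality relation to identify the diagonal entries. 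The heart of the argument is then to show this trace vanishes for every $W \in \k$.

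For that, the plan is to exploit semisimplicity of $\k$. Since $\k$ is semisimple, $\k = [\k,\k]$, so it is enough to prove the trace vanishes when $W = [W_1,W_2]$ with $W_1,W_2 \in \k$. Because $\k$ is a subalgebra, $\ad_W$ preserves $\k$, hence the induced action on the two-dimensional quotient $\g/\k \cong \m$ is well defined, and $\trace\big(\m \circ \ad_W|_{\m}\big)$ equals the trace of $\ad_W$ acting on $\g/\k$. But $\ad_{[W_1,W_2]} = [\ad_{W_1},\ad_{W_2}]$ is a commutator of endomorphisms of the finite-dimensional space $\g/\k$, so its trace is zero. This forces $\scalar{V}{W} = 0$ for all $W \in \k$; since $V \in \V = \k$, we get $V = 0$, and $\F$ is Riemannian.

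The main obstacle I anticipate is the bookkeeping in the second paragraph: one must be careful that the conformality hypothesis is genuinely used to turn the raw expression for $B^{\H}$ into something proportional to $V$, and that the passage from $\trace(\m\circ\ad_W|_\m)$ to the trace on the quotient $\g/\k$ is legitimate — this needs $\m$ to be a complement of the $\ad_W$-invariant subspace $\k$, which is automatic, but the identification of the two traces should be stated cleanly. A secondary subtlety is that the argument only needs $\k = [\k,\k]$ (perfectness), so in fact semisimplicity is used merely through that consequence; this is worth remarking, and it is consistent with the way the theorem is phrased. One should also double-check the edge behaviour when $V$ has a nonzero $\m$-component is impossible: since $V$ is defined as an element of $\V$ in the statement of conformality, no such component arises, but verifying $\V(\nabla_XY+\nabla_YX)=0$ is automatically consistent keeps the computation honest.
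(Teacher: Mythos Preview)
Your argument has a genuine gap: you explicitly rely on the conformality relations $\V(\nabla_X X)=\V(\nabla_Y Y)$ and $\V(\nabla_X Y+\nabla_Y X)=0$ to reduce the vanishing of $B^\H$ to the single trace condition, but the theorem as stated does \emph{not} assume that $\F$ is conformal. What your commutator/trace computation establishes, without any extra hypothesis, is only
\[
\langle B^\H(X,X)+B^\H(Y,Y),\,W\rangle \;=\;\trace\bigl(\m\circ\ad_W|_{\m}\bigr)\;=\;0\qquad (W\in\k),
\]
i.e.\ the trace part of $B^\H$ vanishes. This says nothing about the traceless symmetric part, namely $B^\H(X,X)-B^\H(Y,Y)$ and $B^\H(X,Y)$, which must also vanish for $\F$ to be Riemannian. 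So as written you have proved ``$\F$ conformal $\Rightarrow$ $\F$ Riemannian'' rather than the unconditional statement.

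The paper's route is different and sidesteps this issue. Rather than computing a trace, it invokes (via Remark~3.2 of \cite{Gud-Sve-6}) the structural fact that, because $\k=[\k,\k]$, the adjoint action of $\V$ on $\H$ already lands back in $\V$; in symbols, $\H\circ\ad_W|_{\m}=0$ for every $W\in\k$. This is strictly stronger than your trace-zero conclusion: once one knows $[W,\m]\subset\k$, every inner product $\langle[W,X],Y\rangle$ with $W\in\k$, $X,Y\in\m$ vanishes, so all three entries $B^\H(X,X)$, $B^\H(Y,Y)$, $B^\H(X,Y)$ are killed simultaneously and $\F$ is Riemannian with no conformality assumed. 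Your quotient/commutator idea is the right mechanism for the trace part, but to match the paper you would need the stronger input that the induced $\k$-action on the two-dimensional quotient $\g/\k$ is actually zero, not merely trace-free; that is exactly what the paper imports from the cited remark.
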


\begin{proof}
Since the subgroup $K$ is semisimple we know that its Lie algebra $\k$ satisfies $[\k,\k]=\k$.  It then follows from Remark 3.2 of
\cite{Gud-Sve-6} that the adjoint action of $\V=[\V,\V]$ has no $\H$-component.  The statement is an immediate consequence of this fact.
\end{proof}

Theorem \ref{theorem-semisimple} motivates the following conjecture. 

\begin{conjecture}\label{conjecture-minimal}
Let $K$ be a Lie subgroup of $G$ generating a left-invariant conformal foliation $\F$ of $G$ of codimension two. If $K$ is semisimple then the foliation $\F$ is minimal.  If $K$ is semisimple and compact then $\F$ is totally geodesic.
\end{conjecture}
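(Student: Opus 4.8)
The plan is to translate both conclusions into identities about the Lie bracket by means of the Koszul formula, and then to feed in the hypothesis $[\k,\k]=\k$. First I would fix orthonormal bases $\{E_i\}$ of $\V=\k$ and $\{X_1,X_2\}$ of $\H=\m$, and record the Koszul formula for the left-invariant metric, $2\langle\nabla_AB,C\rangle=\langle[A,B],C\rangle-\langle[B,C],A\rangle+\langle[C,A],B\rangle$. A direct computation using $[E_i,E_i]=0$ gives $\langle\trace B^\V,X_\alpha\rangle=\sum_i\langle[X_\alpha,E_i],E_i\rangle=\trace(\V\,\ad_{X_\alpha}|_{\k})$, while, since $[E,F]\in\k$ is orthogonal to $\m$, one finds $2\langle B^\V(E,F),X_\alpha\rangle=\langle[X_\alpha,E],F\rangle+\langle[X_\alpha,F],E\rangle$ for $E,F\in\k$. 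Thus minimality of $\F$ is equivalent to the vanishing of the vertical trace of $\ad_{X_1}$ and $\ad_{X_2}$, whereas $\F$ is totally geodesic exactly when each operator $\V\,\ad_{X_\alpha}|_{\k}$ is skew-symmetric for $g|_{\k}$ (which visibly forces the trace to vanish, consistent with totally geodesic implying minimal).

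Next I would exploit semisimplicity twice. Since $\k=[\k,\k]$ it is unimodular, so $\trace(\ad_Z|_{\k})=0$ for every $Z\in\k$; and by complete reducibility the $\ad(\k)$-module $\g$ splits as $\g=\k\oplus\m'$ with $[\k,\m']\subseteq\m'$, where $\m'$ is a two-dimensional invariant complement that in general differs from the metric complement $\m$. Writing $X_\alpha=u_\alpha+w_\alpha$ with $u_\alpha\in\k$ and $w_\alpha\in\m'$, the $u_\alpha$-part contributes $\trace(\ad_{u_\alpha}|_{\k})=0$, so minimality collapses to the single residual requirement $\sum_i\langle[w_\alpha,E_i],E_i\rangle=0$; this term is exactly the pairing of the invariant complement $\m'$ against $\k$, and it vanishes as soon as $\m=\m'$. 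Here Theorem \ref{theorem-semisimple} is useful, since it tells us that the foliation is in fact Riemannian, so that $B^\H=0$ and the transverse isotropy action $\H\,\ad_Z|_{\m}$ is $g|_{\m}$-skew for all $Z\in\k$; this is the structural input I would play against the residual trace.

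In the compact case I expect a clean resolution. A compact semisimple algebra such as $\su2$ or $\su2\oplus\su2$ admits no non-trivial two-dimensional real representation, so the transverse module $\m'$ must be trivial and $[\k,\m']=0$; the residual term then vanishes identically and $\F$ is minimal. For the totally geodesic strengthening one is left with showing that $\V\,\ad_{X_\alpha}|_{\k}=\ad_{u_\alpha}|_{\k}$ is skew-symmetric for $g|_{\k}$, equivalently that the horizontal vectors carry no component along $\k$ in the invariant splitting. This is the delicate point even in the compact setting, and it is precisely where the conformality hypothesis, together with the negative-definiteness of the Killing form of $\k$, must be used in an essential way.

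The principal obstacle is the non-compact semisimple case, say $\k=\su2\oplus\slr2$, in which $\m'$ may be the standard two-dimensional representation of the $\slr2$-factor; then $[\k,\m']\neq0$ and there is no formal reason for the residual trace $\sum_i\langle[w_\alpha,E_i],E_i\rangle$ to vanish. In this regime conformality is not a decoration but the decisive constraint: it forces the symmetric part of $\H\,\ad_Z|_{\m}$ to be a multiple of $g|_{\m}$ for each $Z\in\k$, and through the Jacobi identity these relations couple the metric, the complex structure induced on the surface-valued quotient, and the mixed structure constants in a strongly entangled way. Disentangling this coupling uniformly is what I expect to be the hard part, and the realistic plan is to carry it out case by case, by parametrising the conformal left-invariant metrics explicitly and computing $\trace B^\V$ directly for the semisimple subgroups treated in this paper, namely $\SU2\times\SU2$ and $\SU2\times\SLR2$.
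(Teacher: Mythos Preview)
This statement is a \emph{conjecture}; the paper does not prove it in general. What the paper does is verify it in the two semisimple test cases $K=\SU2\times\SU2$ (Theorem~\ref{theorem-SU2*SU2}) and $K=\SU2\times\SLR2$ (Theorem~\ref{theorem-SU2*SLR2}) by first classifying, via the Jacobi identity, all admissible Lie brackets on $\g$ (Propositions~\ref{proposition-SU2*SU2} and~\ref{proposition-SU2*SLR2}) and then computing every component of $B^\V$ by hand. Both verifications are carried out under the standing hypothesis that $g|_{\k}$ is the standard metric coming from the Killing form.

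Your abstract reduction via the Koszul formula and complete reducibility is sound, and in one place it goes genuinely further than the paper: the observation that a compact semisimple $\k$ admits no non-trivial two-dimensional real representation, so that the invariant complement $\m'$ is a trivial $\k$-module and the residual trace $\sum_i\langle[w_\alpha,E_i],E_i\rangle$ vanishes, gives a clean conceptual proof of minimality in the compact case that the paper does not attempt. For the totally geodesic conclusion your reduction to skewness of $\ad_{u_\alpha}|_{\k}$ with respect to $g|_{\k}$ is correct; under the paper's Killing-form hypothesis this is automatic, and your argument then closes the compact case entirely---but without that hypothesis the step is a genuine gap, and the conjecture as stated imposes no such restriction on $g|_{\k}$. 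In the non-compact semisimple case your diagnosis of the obstacle (a possible non-trivial $\slr2$-summand in $\m'$) is accurate, and your proposed fallback to explicit parametrisation and direct computation of $\trace B^\V$ is exactly what the paper does. The honest summary is that you have a partial general argument that overtakes the paper for compact $K$ and minimality, while for the remaining assertions both you and the paper arrive at case-by-case verification, and the conjecture remains open in full generality.
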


We will investigate this conjecture in the four interesting  cases when $K$ is one of the Lie subgroups $\SU2\times\SU 2$, $\SU 2\times\SLR 2$, $\SU 2\times\SO 2$ or $\SLR 2\times\SO 2$ of $G$. 
The following table gives an explanation for our  different choices.
\medskip

\begin{center}
\begin{tabular}{c|c|c}
$K$ & \text{compact} & \text{non-compact} \\
\hline
\text{semisimple}  & $\SU 2\times\SU 2$ & $\SU 2\times\SLR 2$ \\
\text{non-semisimple}  & $\SU 2\times\SO 2$ & $\SLR 2\times\SO 2$ \\
\end{tabular}
\end{center}

\section{The case of $K=\SU2\times\SU 2$ in $G^8$.}\label{section-SU2-SU2}

Let $(G,g)$ be an eight-dimensional Riemannian Lie group and $K$ be its compact semisimple subgroup $\SU2\times\SU 2$ equipped with its standard Riemannian metric induced by the corresponding Killing form.  Let $\F$ be the left-invariant foliation on $G$ generated by the Lie subalgebra $\k=\su 2\times\su 2$ of $\g$.  Let $\{A,B,C,R,S,T,X,Y\}$ be an orthonormal basis for the Lie algebra $\g$ of $G$ such that the two copies of $\su 2$ are generated by $\{A,B,C\}$ and $\{R,S,T\}$, respectively. For these we have the following standard Lie bracket relations 
\begin{equation*}
\left[A,B\right]=2\,C,\ \ \left[C,A\right]=2\,B,\ \  \left[B,C\right]=2\,A,
\end{equation*}
\begin{equation*}
\left[R,S\right]=2\,T,\ \  \left[T,R\right]= 2\,S,\ \  \left[S,T\right]=2\,R.
\end{equation*}
For this choice of bases we have the following result.

\begin{proposition}\label{proposition-SU2*SU2}
Let G be an eight-dimensional Riemannian Lie group and $K$ be its six-dimensional subgroup  $\SU2\times\SU2$ equipped with its standard Riemannian metric induced by the Killing form. Then the Lie bracket relations for $\g$ take the following form
\begin{equation*}
\left[A,B\right]=2\,C,\ \ \left[C,A\right]=2\,B,\ \  \left[B,C\right]=2\,A,
\end{equation*}
\begin{equation*}
\left[R,S\right]=2\,T,\ \  \left[T,R\right]= 2\,S,\ \  \left[S,T\right]=2\,R,
\end{equation*}
\begin{equation*}
\left[A,X\right] =-b_{11}B -c_{11}C,\ \
\left[A,Y\right] = -b_{21}B  -c_{21}C,
\end{equation*}
\begin{equation*}
\left[B,X\right] = b_{11}A - c_{12}C,\ \
\left[B,Y\right] = b_{21}A - c_{22}C,
\end{equation*}
\begin{equation*}
\left[C,X\right] = c_{11}A + c_{12}B,\ \
\left[C,Y\right] = c_{21}A + c_{22}B,
\end{equation*}
\begin{equation*}
\left[R,X\right] = -s_{14}S - t_{14}T,\ \
\left[R,Y\right] = -s_{24}S - t_{24}T,
\end{equation*}
\begin{equation*}
\left[S,X\right] = s_{14}R - t_{15}T,\ \
\left[S,Y\right] = s_{24}R - t_{25}T,
\end{equation*}
\begin{equation*}
\left[T,X\right] = t_{14}R + t_{15}S,\ \
\left[T,Y\right] = t_{24}R + t_{25}S,
\end{equation*}
\begin{equation*}
\left[X,Y\right] = {\rho}X +\theta_{1}A + \theta_{2}B + \theta_{3}C + \theta_{4}R + \theta_{5}S + \theta_{6}T.
\end{equation*}
Here the 13 different real coefficients $b_{jk},c_{jk},s_{jk},t_{jk},\rho$ are arbitrary and $\theta_1,\dots,\theta_6$ are given by
\begin{equation*}
\begin{pmatrix}
\theta_{1}\\
\theta_{2}\\
\theta_{3}\\
\theta_{4}\\
\theta_{5}\\
\theta_{6}
\end{pmatrix}
=
\frac{1}{2}\begin{pmatrix}
-{\rho}c_{12} + b_{11}c_{21} - b_{21}c_{11}\\
{\rho}c_{11} + b_{11}c_{22} - b_{21}c_{12}\\
-{\rho}b_{11} + c_{11}c_{22} - c_{12}c_{21}\\
-{\rho}t_{15} + s_{14}t_{24} - s_{24}t_{14}\\
{\rho}t_{14} + s_{14}t_{25} - s_{24}t_{15}\\
-{\rho}s_{14} + t_{14}t_{25} - t_{15}t_{24}
\end{pmatrix}.
\end{equation*}
\end{proposition}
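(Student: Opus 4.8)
The plan is to obtain the whole bracket table from the representation theory of the compact semisimple algebra $\k=\su 2\oplus\su 2$, using the Jacobi identity only implicitly. First observe that $\g/\k$ is a real $\k$-module of dimension $2$, and that $\su 2\oplus\su 2$ has no nontrivial real representation of dimension $\le 2$: the smallest nontrivial real irreducible $\su 2$-module is the $3$-dimensional adjoint one, and external tensor products only increase the dimension. Hence $\g/\k$ is the trivial module, i.e.\ $[\k,\g]\subseteq\k$, and in particular $[\k,\m]\subseteq\k$, so each of the twelve brackets $[Z,X]$, $[Z,Y]$ with $Z\in\{A,B,C,R,S,T\}$ lies in $\k$; this removes all their $X$- and $Y$-components. (This is stronger than, and re-proves, the conclusion of Theorem \ref{theorem-semisimple}, and it will make the minimality of $\F$ automatic below.) By complete reducibility of $\k$-modules, $\g=\k\oplus M$ for some $\k$-submodule $M$; as $M\cong\g/\k$ is trivial, $M$ is contained in the centralizer $\n:=\{U\in\g:[Z,U]=0\ \text{for all}\ Z\in\k\}$, and by dimension $M=\n$. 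Thus $\g=\k\oplus\n$ with $\n$ a two-dimensional subalgebra and $[\k,\n]=0$, so in fact $\g\cong\k\times\n$ as Lie algebras, with $\n$ either abelian or the non-abelian two-dimensional algebra.

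Since $\m=\k^\perp$ is a complement of $\k$, each $X\in\m$ has a unique lift $\hat X\in\n$ with $\hat X-X\in\k$; write $\hat X=X+\gamma(X)$, where $\gamma\colon\m\to\k$ is linear. Then $[Z,X]=-\ad_Z\,\gamma(X)$ for every $Z\in\k$. As $\ad_A$ kills the second $\su 2$-factor, preserves the first, and has image orthogonal to $A$ (the Killing form being $\ad$-invariant), the bracket $[A,X]=-\ad_A\gamma(X)$ lies in $\span\{B,C\}$; a direct computation with $[A,B]=2C$, $[C,A]=2B$, $[B,C]=2A$ then shows that $[A,X]$, $[B,X]$, $[C,X]$ are governed by the component of $\gamma(X)$ in $\span\{A,B,C\}$ and take exactly the displayed form, with the three shared coefficients $b_{11},c_{11},c_{12}$. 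The remaining nine brackets are handled in the same way, and the twelve numbers $b_{jk},c_{jk},s_{jk},t_{jk}$ amount to nothing but an invertible linear relabelling of the entries of $\gamma$, hence are arbitrary.

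For the last bracket, $[\k,\n]=0$ yields $[X,Y]=[\hat X,\hat Y]+[\gamma(X),\gamma(Y)]$ with $[\hat X,\hat Y]\in\n$ and $[\gamma(X),\gamma(Y)]\in\k$. Choosing the orthonormal pair $\{X,Y\}$ so that $X$ lies along the orthogonal projection to $\m$ of the (at most one-dimensional) derived algebra $[\n,\n]$ — a rotation within $\m$ that leaves the normal forms above intact — forces $[\hat X,\hat Y]=\rho\,\hat X$ for a scalar $\rho$, whence $[X,Y]=\rho\,X+\bigl(\rho\,\gamma(X)+[\gamma(X),\gamma(Y)]\bigr)$. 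Thus $[X,Y]$ has $\H$-part $\rho X$ and $\V$-part $\theta_1A+\cdots+\theta_6T=\rho\,\gamma(X)+[\gamma(X),\gamma(Y)]$; expanding the right-hand side in the chosen basis, with the same relabelling, gives exactly the six stated formulas for $\theta_1,\dots,\theta_6$. All Jacobi identities now hold automatically (they hold in $\k\times\n$ and survive the orthonormal change of basis), and $\trace B^\V=0$ since $\langle[Z,X],Z\rangle=\langle[Z,Y],Z\rangle=0$ for every $Z\in\{A,\dots,T\}$; so the thirteen coefficients $b_{jk},c_{jk},s_{jk},t_{jk},\rho$ are genuinely free, which is the assertion.

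The step I expect to be most laborious is the bookkeeping of the two preceding paragraphs: matching the intrinsic data $(\gamma,\rho)$ to the thirteen scalars with the correct signs and shared entries, and then reading off the $\theta_i$ — routine but error-prone. If one prefers to avoid representation theory entirely, the same conclusions can be reached by imposing the Jacobi identities $\jid(XYZ)=0$ for $Z\in\{A,\dots,T\}$ on a fully general ansatz for the unknown brackets; in that route the main obstacle is instead to verify the consistency of the resulting over-determined linear system for $\theta_1,\dots,\theta_6$.
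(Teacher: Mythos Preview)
Your argument is correct and takes a genuinely different route from the paper's. The paper proceeds computationally: starting from the ansatz $[\V,\H]\subseteq\V$ (justified by semisimplicity and the cited Remark~3.2 of \cite{Gud-Sve-6}), it runs through the Jacobi identities for the triples $(A,B,X)$, $(C,A,X)$, $(B,C,X)$ and their $Y$- and $(R,S,T)$-counterparts to force the diagonal and cross-factor coefficients to vanish, and then uses the triples $(A,X,Y)$, $(B,X,Y)$ together with symmetry to solve for $\theta_1,\dots,\theta_6$; the freeness of the thirteen parameters is left implicit in the absence of further relations. You replace all of this by the structural observation that $\su2\oplus\su2$ has no nontrivial real module of dimension at most two, so $\k$ is an ideal and, by Weyl's complete reducibility, $\g\cong\k\times\n$ for a two-dimensional $\n$; the bracket table and the $\theta_i$ then fall out of the graph map $\gamma\colon\m\to\k$ and the single structure constant $\rho$ of $\n$, with Jacobi automatic. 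This is cleaner, makes the freeness of the parameters transparent, and already contains the proof of Theorem~\ref{theorem-SU2*SU2}. The trade-off is portability: the ``no small module'' step uses compactness, since $\slr2$ does have its standard real two-dimensional module; so for $K=\SU2\times\SLR2$ your first paragraph no longer forces $\g/\k$ to be trivial, and one is pushed back to the direct Jacobi-identity computation you sketch at the end --- which is precisely the paper's template for all four cases.
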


\begin{proof}
Since $K=\SU 2\times\SU 2$ is semisimple we know from the proof of Theorem \ref{theorem-semisimple} that the adjoint action of $\V=[\V,\V]$ has no
$\H$-component. This tells us that the corresponding  Lie bracket relations take the form 
\begin{equation*}
	\left[A,B\right]=2\,C,\ \ \left[C,A\right]=2\,B,\ \  \left[B,C\right]=2\,A,
\end{equation*}
\begin{equation*}
	\left[R,S\right]=2\,T,\ \  \left[T,R\right]= 2\,S,\ \  \left[S,T\right]=2\,R,
\end{equation*}
\begin{eqnarray*}
\left[A,X\right] &=& a_{11}A + a_{12}B + a_{13}C + a_{14}R + a_{15}S + a_{16}T,\\
\left[A,Y\right] &=& a_{21}A + a_{22}B + a_{23}C + a_{24}R + a_{25}S + a_{26}T,\\
\left[B,X\right] &=& b_{11}A + b_{12}B + b_{13}C + b_{14}R + b_{15}S + b_{16}T,\\
\left[B,Y\right] &=& b_{21}A + b_{22}B + b_{23}C + b_{24}R + b_{25}S + b_{26}T,\\
\left[C,X\right] &=& c_{11}A + c_{12}B + c_{13}C + c_{14}R + c_{15}S + c_{16}T,\\
\left[C,Y\right] &=& c_{21}A + c_{22}B + c_{23}C + c_{24}R + c_{25}S + c_{26}T,\\
\left[R,X\right] &=& r_{11}A + r_{12}B + r_{13}C + r_{14}R + r_{15}S + r_{16}T,\\
\left[R,Y\right] &=& r_{21}A + r_{22}B + r_{23}C + r_{24}R + r_{25}S + r_{26}T,\\
\left[S,X\right] &=& s_{11}A + s_{12}B + s_{13}C + s_{14}R + s_{15}S + s_{16}T,\\
\left[S,Y\right] &=& s_{21}A + s_{22}B + s_{23}C + s_{24}R + s_{25}S + s_{26}T,\\
\left[T,X\right] &=& t_{11}A + t_{12}B + t_{13}C + t_{14}R + t_{15}S + t_{16}T,\\
\left[T,Y\right] &=& t_{21}A + t_{22}B + t_{23}C + t_{24}R + t_{25}S + t_{26}T,\\
\left[X,Y\right] &=& {\rho}X+\theta_{1}A+\theta_{2}B+\theta_{3}C+\theta_{4}R + \theta_{5}S + \theta_{6}T.
\end{eqnarray*}

The Jacobi identites involving the vector fields $A,B,C,X\in\g$ provide us with the following interesting identities
\begin{eqnarray*}\label{jiabx}
	0&=&\jid(ABX)\\&=&2\,((c\ss11 + a\ss13)\,A + (c\ss12 + b\ss13)\,B + (c\ss13 - a\ss11 - b\ss12)\,C\\
	& &\quad +c\ss14R + c\ss15S + c\ss16T),
\end{eqnarray*}
\begin{eqnarray*}\label{jicax}
	0&=&\jid(CAX)\\ &=& 2\,((b\ss11+a\ss12)A +(b\ss12 - a\ss11 - c\ss13)B + (b\ss13 + c\ss12)C\\
	& & \quad + b\ss14R + b\ss15S + b\ss16T),
\end{eqnarray*}
\begin{eqnarray*}\label{jibcx}
	0&=&\jid(BCX)\\
	&=&2\,((a\ss11 - b\ss12 - c\ss13)A + (a\ss12 + b\ss11)B + (a\ss13 + c\ss11)C\\
	& &\quad + a\ss14R + a\ss15S + a\ss16T).
\end{eqnarray*}
From these we can immediately see that 
\begin{equation*}
\begin{pmatrix}
a\ss14\\
a\ss15\\
a\ss16\\
\end{pmatrix}
=
\begin{pmatrix}
b\ss14\\
b\ss15\\
b\ss16\\
\end{pmatrix}
=
\begin{pmatrix}
c\ss14\\
c\ss15\\
c\ss16\\
\end{pmatrix}= 0.
\end{equation*}

Then by the symmetry in $X$ and $Y$ we see that this holds for $a_{i}, b_{i}, c_{i}$ where $i \in \{24, 25, 26\}$. Additionally, we see by the symmetry in $R, S, T$ this also holds for $r_{j}, s_{j}, t_{j}$ for $j \in \{11,12, 13, 21, 22, 23\}$.
We also see that these Jacobi identities are satisfied according to the following system of equations
\begin{equation*}
\begin{pmatrix}
a\ss11 - b\ss12 - c\ss13\\
a\ss11 - b\ss12 + c\ss13\\
a\ss11 + b\ss12 - c\ss13\\
a\ss12 + b\ss11\\
a\ss13 + c\ss11\\
b\ss13 + c\ss12
\end{pmatrix} = 0.
\end{equation*}
Solving this system we obtain
\begin{equation}
\begin{pmatrix}
a\ss11\\
b\ss12\\
c\ss13
\end{pmatrix}= 0,
\hskip1cm
\begin{pmatrix}
a\ss12\\
a\ss13\\
b\ss13
\end{pmatrix} = 
\begin{pmatrix}
-b\ss11\\
-c\ss11\\
-c\ss12
\end{pmatrix}.
\end{equation}
Due to the symmetry in both $X$ and $Y$ we see that the Jacobi identities involving $A, B, C, Y$ are equivalent to the following system of equations
\begin{equation*}
\begin{pmatrix}
a\ss21 - b\ss22 - c\ss23\\
a\ss21 - b\ss22 + c\ss23\\
a\ss21 + b\ss22 - c\ss23\\
a\ss22 + b\ss21\\
a\ss23 + c\ss21\\
b\ss23 + c\ss22
\end{pmatrix} = 0.
\end{equation*}
Solving this we obtain the following results from satisfying the Jacobi identies
\begin{equation}
\begin{pmatrix}
a\ss21\\
b\ss22\\
c\ss23
\end{pmatrix}
= 0, \hskip1cm
\begin{pmatrix}
a\ss22\\
a\ss23\\
b\ss23
\end{pmatrix} = 
\begin{pmatrix}
-b\ss21\\
-c\ss21\\
-c\ss22
\end{pmatrix}.
\end{equation}
By the symmetry in $A, B, C$ and $R, S, T$ we have that the Jacobi identities involving $R, S, T, X$ are equivalent to the following system of equations
\begin{equation*}
\begin{pmatrix}
r\ss14 - s\ss15 - t\ss16\\
r\ss14 - s\ss15 + t\ss16\\
r\ss14 + s\ss15 - t\ss16\\
r\ss15 + s\ss14\\
r\ss16 + t\ss14\\
s\ss16 + t\ss15
\end{pmatrix} = 0.
\end{equation*}
Solving this we then yield the following results from satisfying the Jacobi identies
\begin{equation}
\begin{pmatrix}
r\ss14\\
s\ss15\\
t\ss16
\end{pmatrix}= 0,
\hskip1cm
\begin{pmatrix}
r\ss15\\
r\ss16\\
s\ss16
\end{pmatrix} = 
\begin{pmatrix}
-s\ss14\\
-t\ss14\\
-t\ss15
\end{pmatrix}.
\end{equation}
Again by the symmetry in $X$ and $Y$ we see that the Jacobi identities involving $R, S, T, Y$ are equivalent to the following system of equations
\begin{equation*}
\begin{pmatrix}
r\ss24 - s\ss25 - t\ss26\\
r\ss24 - s\ss25 + t\ss26\\
r\ss24 + s\ss25 - t\ss26\\
r\ss25 + s\ss24\\
r\ss26 + t\ss24\\
s\ss26 + t\ss25
\end{pmatrix} = 0.
\end{equation*}
Solving this we then obtain the following results from satisfying the Jacobi identies
\begin{equation}\label{RSTres}
\begin{pmatrix}
r\ss24\\
s\ss25\\
t\ss26
\end{pmatrix}=0,
\hskip1cm
\begin{pmatrix}
r\ss25\\
r\ss26\\
s\ss26
\end{pmatrix} = 
\begin{pmatrix}
-s\ss24\\
-t\ss24\\
-t\ss25
\end{pmatrix}.
\end{equation}
Now considering the Jacobi identities involving both $X$ and $Y$ we see that
\begin{eqnarray*}
	0&=&\jid(AXY)\\
	&=&(\,{\rho}b\ss11 - c\ss11c\ss22 + c\ss21c\ss12 + 2\theta\ss3\, )\,B + ({\rho}c\ss11 + b\ss11c\ss22 - b\ss21c\ss12 - 2\theta\ss2\,)C,
\end{eqnarray*}
\begin{eqnarray*}
	0&=&\jid(BXY)\\
	&=&-(\,{\rho}b\ss11 - c\ss11c\ss22 + c\ss21c\ss12 + 2\theta\ss3\, )\,A + ({\rho}c\ss12 - b\ss11c\ss21 + b\ss21c\ss11 + 2\theta\ss1\,)C.
\end{eqnarray*}
Satisfying these Jacobi identities is equivalent to the following system of equations
\begin{equation*}
\begin{pmatrix}
{\rho}c\ss12 - b\ss11c\ss21 + b\ss21c\ss11 + 2\theta\ss1\,\\
{\rho}c\ss11 + b\ss11c\ss22 - b\ss21c\ss12 - 2\theta\ss2\,\\
{\rho}b\ss11 - c\ss11c\ss22 + c\ss21c\ss12 + 2\theta\ss3\,
\end{pmatrix} = 0,
\end{equation*}
from which we get
\begin{equation*}
\begin{pmatrix}
\theta\ss1\,\\
\theta\ss2\,\\
\theta\ss3\,
\end{pmatrix} = 
\frac{1}{2}\begin{pmatrix}
-{\rho} c\ss12 + b\ss11 c\ss21 - b\ss21 c\ss11\\
{\rho} c\ss11 + b\ss11 c\ss22 - b\ss21 c\ss12\\
-{\rho} b\ss11 + c\ss11 c\ss22 - c\ss12 c\ss21
\end{pmatrix}.
\end{equation*}
Then by the symmetry of the triples $(A, B,C )$ and $(R,S,T)$ we find that 
\begin{equation*}
\begin{pmatrix}
\theta\ss4\,\\
\theta\ss5\,\\
\theta\ss6\,
\end{pmatrix} = 
\frac{1}{2}\begin{pmatrix}
-{\rho}t_{15} + s_{14}t_{24} - s_{24}t_{14}\\
{\rho}t_{14} + s_{14}t_{25} - s_{24}t_{15}\\
-{\rho}s_{14} + t_{14}t_{25} - t_{15}t_{24}
\end{pmatrix}.
\end{equation*}
These calculations provide us with the stated result.
\end{proof}

The following Theorem \ref{theorem-SU2*SU2} supports the case of Conjecture \ref{conjecture-minimal} that when the semisimple subgroup $K$ is compact, the resulting foliation is totally geodesic. Together with Proposition \ref{proposition-SU2*SU2}, this provides us with a new 13-dimensional family of $8$-dimensional Lie groups carrying a conformal foliation with minimal leaves of codimension two. This is generated by the 13 real parameters $b_{jk}, c_{jk}, s_{jk}, t_{jk}, \rho$ given in the statement of Proposition \ref{proposition-SU2*SU2}.

\begin{theorem}\label{theorem-SU2*SU2}
Let $G$ be an eight-dimensional Riemannian Lie group containing the subgroup $\SU2\times\SU2\,.$ And let $\V$ be the left-invariant distribution generated by the Lie algebra $\su2\times\su2\,.$ Then the resulting foliation $\F$ tangent to $\V$ is Riemannian and totally geodesic.
\end{theorem}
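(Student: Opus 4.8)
The \emph{Riemannian} part of the statement needs no further work: since $\SU2\times\SU2$ is semisimple, it is immediate from Theorem \ref{theorem-semisimple}. So the content of the theorem is that $\F$ is \emph{totally geodesic}, i.e.\ that the second fundamental form $B^\V$ of $\V$ vanishes identically. As everything is left-invariant it suffices to check $\H(\nabla_E F)=0$ for $E,F$ ranging over the orthonormal basis $\{A,B,C,R,S,T\}$ of $\k=\su2\times\su2$, that is, $\langle\nabla_E F,X\rangle=\langle\nabla_E F,Y\rangle=0$.

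The plan is to start from the Koszul formula for a left-invariant metric: for left-invariant fields and $Z\in\{X,Y\}$,
\[
2\,\langle\nabla_E F,Z\rangle=\langle[E,F],Z\rangle-\langle[F,Z],E\rangle+\langle[Z,E],F\rangle .
\]
The first term vanishes because $[E,F]\in[\k,\k]\subseteq\k$, which is orthogonal to $X$ and $Y$. For the other two terms I would use the bracket relations of Proposition \ref{proposition-SU2*SU2} — in particular the fact, already exploited there, that $[\k,\m]\subseteq\k$ — so that $\ad_Z$ preserves $\k$ and the two remaining terms are computed inside $\k$. Writing $D_Z:=\ad_Z|_\k$ and using $[F,Z]=-[Z,F]$, what has to be shown reduces to
\[
\langle D_ZE,F\rangle+\langle D_ZF,E\rangle=0\qquad(E,F\in\k,\ Z\in\{X,Y\}),
\]
i.e.\ that $D_Z$ is skew-symmetric with respect to $g|_\k$.

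To establish this I would argue structurally. By the Jacobi identity together with $[Z,\k]\subseteq\k$, the endomorphism $D_Z$ is a derivation of $\k=\su2\times\su2$. This Lie algebra is semisimple, so every derivation of it is inner, whence $D_Z=\ad_W$ for some $W\in\k$. Since $g|_\k$ is the bi-invariant metric on $\k$ determined by the Killing form, $\ad_W$ is automatically skew-symmetric with respect to it. Hence $B^\V\equiv 0$ and $\F$ is totally geodesic. (Alternatively one can skip the structural argument and simply read off the $6\times6$ matrices of $\ad_X|_\k$ and $\ad_Y|_\k$ from Proposition \ref{proposition-SU2*SU2}: in the basis $\{A,B,C,R,S,T\}$ each is block-diagonal with two $3\times3$ blocks, and those blocks are visibly skew-symmetric, governed respectively by $b_{11},c_{11},c_{12}$ and $s_{14},t_{14},t_{15}$, and by their second-index analogues for $\ad_Y$.)

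The computation here is light; the only real obstacle is bookkeeping — keeping the sign conventions in the Koszul formula straight, and making sure that $\ad_Z$ really does map $\k$ into $\k$, which is exactly where Proposition \ref{proposition-SU2*SU2} (equivalently Remark 3.2 of \cite{Gud-Sve-6}) is used. On the structural route the only external ingredient is the classical fact that every derivation of a semisimple Lie algebra is inner. Finally note that totally geodesic trivially implies minimal leaves, so this result is precisely the compact semisimple case of Conjecture \ref{conjecture-minimal}, as the surrounding discussion indicates.
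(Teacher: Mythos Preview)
Your argument is correct and, in its structural form, genuinely different from the paper's. Both proofs start by reducing, via Koszul, to the quantity $\langle[Z,E],F\rangle+\langle[Z,F],E\rangle$ for $Z\in\{X,Y\}$ and $E,F\in\k$; the paper then simply evaluates all $21$ entries of $B^\V$ against the coefficient relations extracted in Proposition~\ref{proposition-SU2*SU2} and watches each one vanish. Your route instead observes that $\ad_Z|_\k$ is a derivation of the semisimple algebra $\k$ (using $[\m,\k]\subseteq\k$), hence inner, hence skew for the Killing metric on $\k$. This buys you an explanation rather than a verification: totally geodesic holds precisely because the restriction $g|_\k$ is $\ad_\k$-invariant, which in turn pinpoints why the argument breaks for $\su2\times\slr2$ in Section~\ref{section-SU2-SL2} --- there the ``standard Riemannian metric'' on the $\slr2$ factor is not the (indefinite) Killing form, so inner derivations need not be skew, and indeed $B^\V(R,S)$ and $B^\V(R,T)$ survive. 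The paper's entry-by-entry check, on the other hand, is self-contained and does not invoke the theorem that derivations of semisimple Lie algebras are inner. Your parenthetical alternative --- reading off the block-diagonal skew $3\times3$ matrices of $\ad_X|_\k$ and $\ad_Y|_\k$ from Proposition~\ref{proposition-SU2*SU2} --- is exactly the paper's computation, compressed.
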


\begin{proof}
It immediately follows from Theorem \ref{theorem-semisimple} that the foliation $\F$ is Riemannian, thus it suffices to show that $\F$ is totally geodesic. For smooth vector fields $ A,B \in \V$ in the vertical distribution we have 
\begin{eqnarray*}
\BV(AB)&=&\frac 12\,\H (\nab AB+\nab BA)\\
&=& \frac{1}{2}\Big((\scalar{\lieb(XA)}{B} + \scalar{\lieb(XB)}{A})X\\
& & \qquad + (\scalar{\lieb(YA)}{B} + \scalar{\lieb(YB)}{A})Y\Big).
\end{eqnarray*}

Evaluating this for the basis elements in $\V$ and recalling the simplifications made in Proposition \ref{proposition-SU2*SU2} we determine
\begin{eqnarray*}
\BV(AA) \, &=&-(a_{11}X+a_{21}Y) = 0,\\
\BV(AB) \, &=&\, -\frac{1}{2}((a\ss12+b\ss11)X + (a\ss22 + b\ss21)Y) = 0,\\
\BV(AC) \, &=&\, -\frac{1}{2}((a\ss13+c\ss11)X + (a\ss23 + c\ss21)Y) = 0,\\
\BV(AR) \, &=&\, -\frac{1}{2}((a\ss14+r\ss11)X + (a\ss24 + r\ss21)Y) = 0,\\
\BV(AS) \, &=&\, -\frac{1}{2}((a\ss15+s\ss11)X + (a\ss25 + s\ss21)Y) = 0,\\
\BV(AT) \, &=&\, -\frac{1}{2}((a\ss16+t\ss11)X + (a\ss26 + t\ss21)Y) = 0,\\
\BV(BB) \, &=&\, -(b\ss12X + b\ss22Y) = 0,\\
\BV(BC) \, &=&\, -\frac{1}{2}((b\ss13+c\ss12)X + (b\ss23 + c\ss22)Y) = 0,\\
\BV(BR) \, &=&\, -\frac{1}{2}((b\ss14+r\ss12)X + (b\ss24 + r\ss22)Y) = 0,\\
\BV(BS) \, &=&\, -\frac{1}{2}((b\ss15+s\ss12)X + (b\ss25 + s\ss22)Y) = 0,\\
\BV(BT) \, &=&\, -\frac{1}{2}((b\ss16+t\ss12)X + (b\ss26 + t\ss22)Y) = 0,\\
\BV(CC) \, &=&\, -(c\ss13X + c\ss23Y) = 0,\\
\BV(CR) \, &=&\, -\frac{1}{2}((c\ss14+r\ss13)X + (c\ss24 + r\ss23)Y) = 0,\\
\BV(CS) \, &=&\, -\frac{1}{2}((c\ss15+s\ss13)X + (c\ss25 + s\ss23)Y) = 0,\\
\BV(CT) \, &=&\, -\frac{1}{2}((c\ss16+t\ss13)X + (c\ss26 + t\ss23)Y) = 0,\\
\BV(RR) \, &=&\, -(r\ss14X + r\ss24Y) = 0,\\
\BV(RS) \, &=&\, -\frac{1}{2}((r\ss15+s\ss14)X + (r\ss25 + s\ss24)Y) = 0,\\
\BV(RT) \, &=&\, -\frac{1}{2}((r\ss16+t\ss14)X + (r\ss26 + t\ss24)Y) = 0,\\
\BV(SS) \, &=&\, -(s\ss15X + s\ss25Y) = 0,\\
\BV(ST) \, &=&\, -\frac{1}{2}((s\ss16+t\ss15)X + (s\ss26 + t\ss25)Y) = 0,\\
\BV(TT) \, &=&\, -(t\ss16X + t\ss26Y) = 0.
\end{eqnarray*}
From this we see that the second fundamental form $B^\V$ of the vertical distribution vanishes. Thus the foliation $\F$ is totally geodesic.	
\end{proof}

\section{The case of $K=\SU2\times\SLR 2$ in $G^8$.}\label{section-SU2-SL2}

Let $(G,g)$ be an $8$-dimensional Riemannian Lie group and $K$ be its $6$-dimensional non-compact semisimple subgroup $\SU2\times\SLR 2$ equipped with its standard Riemannian metric.  Further let $\F$ be the left-invariant foliation on $G$ generated by the Lie subalgebra $\k=\su 2\times\slr 2$ of $\g$.  Let $\{A,B,C,R,S,T,X,Y\}$ be an orthonormal basis for the Lie algebra $\g$ of $G$ such that $\su 2$ is generated by $\{A,B,C\}$ and $\slr 2$ by $\{R,S,T\}$, respectively, with their standard Lie bracket relations 
\begin{equation*}
\left[A,B\right]=2\,C,\ \ \left[C,A\right]=2\,B,\ \  \left[B,C\right]=2\,A,
\end{equation*}
\begin{equation*}
\left[R,S\right]=2\,T,\ \  \left[T,R\right]= 2\,S,\ \  \left[S,T\right]=-2\,R.
\end{equation*}

This case might look much the same as that of Section \ref{section-SU2-SU2}, but the minus in the last equation changes everything.  Both the subgroups are semisimple, but $\SU 2\times\SU 2$ is compact and  $\SU 2\times\SLR 2$ is not.  For the latter case we have the next result.

\begin{proposition}\label{proposition-SU2*SLR2}
Let G be an eight-dimensional Riemannian Lie group and $K$ be its six-dimensional Lie subgroup  $\SU2\times\SLR 2$ equipped with its standard Riemannian metric. Then the Lie bracket relations for $\g$ take the following form
\begin{equation*}
\left[A,B\right]=2\,C,\ \ \left[C,A\right]=2\,B,\ \  \left[B,C\right]=2\,A,
\end{equation*}
\begin{equation*}
\left[R,S\right]=2\,T,\ \  \left[T,R\right]= 2\,S,\ \  \left[S,T\right]=-2\,R,
\end{equation*}
\begin{equation*}
\left[A,X\right] =-b_{11}B -c_{11}C,\ \
\left[A,Y\right] = -b_{21}B  -c_{21}C,
\end{equation*}
	\begin{equation*}
	\left[B,X\right] = b_{11}A - c_{12}C,\ \
	\left[B,Y\right] = b_{21}A - c_{22}C,
	\end{equation*}
	\begin{equation*}
	\left[C,X\right] = c_{11}A + c_{12}B,\ \
	\left[C,Y\right] = c_{21}A + c_{22}B,
	\end{equation*}
	\begin{equation*}
	\left[R,X\right] = -s_{14}S - t_{14}T,\ \
	\left[R,Y\right] = -s_{24}S - t_{24}T,
	\end{equation*}
	\begin{equation*}
	\left[S,X\right] = s_{14}R - t_{15}T,\ \
	\left[S,Y\right] = s_{24}R - t_{25}T,
	\end{equation*}
	\begin{equation*}
	\left[T,X\right] = t_{14}R + t_{15}S,\ \
	\left[T,Y\right] = t_{24}R + t_{25}S,
	\end{equation*}
	\begin{equation*}
	\left[X,Y\right] = {\rho}X +\theta_{1}A + \theta_{2}B + \theta_{3}C + \theta_{4}R + \theta_{5}S + \theta_{6}T.
	\end{equation*}
Here the 13 different real coefficients $b_{jk},c_{jk},s_{jk},t_{jk},\rho$ are arbitrary and $\theta_1,\dots,\theta_6$ are given by
\begin{equation*}
\begin{pmatrix}
\theta_{1}\\
\theta_{2}\\
\theta_{3}\\
\theta_{4}\\
\theta_{5}\\
\theta_{6}
\end{pmatrix}
=
\frac{1}{2}\begin{pmatrix}
-{\rho}c_{12} + b_{11}c_{21} - b_{21}c_{11}\\
{\rho}c_{11} + b_{11}c_{22} - b_{21}c_{12}\\
-{\rho}b_{11} + c_{11}c_{22} - c_{12}c_{21}\\
-{\rho}t_{15} - s_{14}t_{24} + s_{24}t_{14}\\
-{\rho}t_{14} - s_{14}t_{25} + s_{24}t_{15}\\
{\rho}s_{14} - t_{14}t_{25} + t_{15}t_{24}\\
\end{pmatrix}.
\end{equation*}	
\end{proposition}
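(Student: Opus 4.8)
The plan is to follow exactly the strategy that worked for Proposition \ref{proposition-SU2*SU2}, carefully tracking how the single sign change in $[S,T]=-2R$ propagates through the Jacobi identities. First I would invoke Theorem \ref{theorem-semisimple}: since $K=\SU2\times\SLR2$ is semisimple, the adjoint action of $\V=[\V,\V]$ has no $\H$-component, so the brackets $[Z,W]$ with $Z\in\{A,B,C,R,S,T\}$ and $W\in\{X,Y\}$ have no $X,Y$-component. This reduces the unknowns to the coefficients $a_{ij},\dots,t_{ij}$ expressing these brackets in the $\V$-basis, plus $\rho$ and $\theta_1,\dots,\theta_6$ in $[X,Y]$.

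Next I would impose the Jacobi identities. The identities involving $A,B,C$ together with $X$ (resp.\ $Y$) are literally unchanged from Proposition \ref{proposition-SU2*SU2}, since the $\su2$-factor is untouched; these force $a_{1j}=b_{1j}=c_{1j}=0$ for $j\in\{14,15,16\}$, $a_{11}=b_{12}=c_{13}=0$, and the relations $a_{12}=-b_{11}$, $a_{13}=-c_{11}$, $b_{13}=-c_{12}$, with the analogous conclusions for the $Y$-row. The genuinely new computation is the set of Jacobi identities involving $R,S,T$ with $X$ (resp.\ $Y$): because $[S,T]=-2R$ rather than $+2R$, the linear systems analogous to the displayed $6\times1$ systems in the proof of Proposition \ref{proposition-SU2*SU2} acquire sign changes. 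Solving them I expect to still get $r_{14}=s_{15}=t_{16}=0$ and $r_{1j}=s_{1j}=t_{1j}=0$ for the "cross" $\V$-into-$\su2$ components, but the surviving relations among $r_{15},r_{16},s_{16}$ and $s_{14},t_{14},t_{15}$ will pick up signs, which is precisely what changes the last three rows of the $\theta$-vector relative to the previous proposition. Finally the Jacobi identities $\jid(AXY)$, $\jid(BXY)$ (unchanged, yielding $\theta_1,\theta_2,\theta_3$ exactly as before) and $\jid(RXY)$, $\jid(SXY)$ (modified by the sign of $[S,T]$, yielding the new $\theta_4,\theta_5,\theta_6$) determine all six $\theta_i$; one checks the remaining Jacobi identity $\jid(TXY)$ and $\jid(XYC)$-type identities are then automatically satisfied, leaving the 13 parameters $b_{jk},c_{jk},s_{jk},t_{jk},\rho$ free.

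The main obstacle, such as it is, will be bookkeeping: correctly propagating the sign of $[S,T]=-2R$ through each triple Jacobi identity so that the final formulae for $\theta_4,\theta_5,\theta_6$ come out as stated, namely $2\theta_4=-\rho t_{15}-s_{14}t_{24}+s_{24}t_{14}$, $2\theta_5=-\rho t_{14}-s_{14}t_{25}+s_{24}t_{15}$, $2\theta_6=\rho s_{14}-t_{14}t_{25}+t_{15}t_{24}$. A convenient way to organize this is to observe that replacing $T\mapsto \sqrt{-1}\,T$, $R\mapsto \sqrt{-1}\,R$ formally turns the $\su2$-relations into the $\slr2$-relations; tracking this substitution through the $\su2$-results of Proposition \ref{proposition-SU2*SU2} reproduces the claimed sign pattern and serves as an internal consistency check. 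Modulo this routine sign-chasing the proof is a verbatim repetition of the argument in Proposition \ref{proposition-SU2*SU2}, which already did the hard structural work of showing all cross-terms vanish and expressing $\theta_1,\ldots,\theta_6$ in terms of the free parameters.
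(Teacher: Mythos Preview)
Your proposal is correct and follows exactly the paper's own approach: the paper's proof of this proposition simply states that ``the arguments needed here are exactly the same as already provided in the proof of Proposition \ref{proposition-SU2*SU2}'', deferring the details to \cite{Tur}, and your outline of which Jacobi identities to invoke and where the sign change from $[S,T]=-2R$ enters is precisely that argument. One minor slip in a side remark: the complexification $R\mapsto iR$, $T\mapsto iT$ does not convert the $\su2$-relations to the $\slr2$-relations (it spoils $[T,R]=2S$); a correct choice is $S\mapsto iS$, $T\mapsto iT$, but this does not affect your main line of proof.
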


\begin{proof}
The arguments needed here are exactly the same as already provided in the proof of Proposition \ref{proposition-SU2*SU2}, for the details see \cite{Tur}.
\end{proof}

The following Theorem \ref{theorem-SU2*SLR2} supports the case of Conjecture \ref{conjecture-minimal}, that when the subgroup $K$ is semisimple then the resulting foliation is minimal. Together with Proposition \ref{proposition-SU2*SLR2}, this gives a new 13-dimensional family of $8$-dimensional Lie groups carrying a conformal foliation with minimal leaves of codimension two.  This is generated by the 13 real parameters $b_{jk}, c_{jk}, s_{jk}, t_{jk}, \rho$ given in the statement of Proposition \ref{proposition-SU2*SLR2}.

\begin{theorem}\label{theorem-SU2*SLR2}
Let $G$ be an eight-dimensional Riemannian Lie group containing the subgroup $\SU2\times\SLR2.$ And let $\V$ be the left-invariant distribution generated by the Lie algebra $\su2\times\slr2$. Then the resulting foliation $\F$ tangent to $\V$ is Riemannian and  minimal. Furthermore, $\F$ is totally geodesic if and only if $s_{14}=s_{24}=t_{14}=t_{24}=0$, where $s_{14}, s_{24}, t_{14}, t_{24}$ are the parameters defined in Proposition \ref{proposition-SU2*SLR2}.
\end{theorem}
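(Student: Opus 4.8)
The plan is as follows. First, $\F$ is Riemannian by Theorem \ref{theorem-semisimple}, since $\k=\su 2\times\slr 2$ is semisimple; in particular $\F$ is conformal, so it remains only to control the second fundamental form $B^\V$ of the vertical distribution. For left-invariant vector fields $E,F\in\V$ one has, just as in the proof of Theorem \ref{theorem-SU2*SU2},
\[
\BV(EF)=\frac12\,\H(\nab EF+\nab FE)=\frac12\Big(\big(\scalar{\lieb(XE)}{F}+\scalar{\lieb(XF)}{E}\big)X+\big(\scalar{\lieb(YE)}{F}+\scalar{\lieb(YF)}{E}\big)Y\Big),
\]
and I would evaluate this on each pair taken from the orthonormal basis $\{A,B,C,R,S,T\}$ of $\V$, reading the brackets off from Proposition \ref{proposition-SU2*SLR2}.

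For every pair inside the $\su 2$-factor $\{A,B,C\}$, and for every mixed pair with one entry in $\{A,B,C\}$ and the other in $\{R,S,T\}$, the brackets feeding into the formula are formally the same as those of Section \ref{section-SU2-SU2}, so these computations are word-for-word the ones in the proof of Theorem \ref{theorem-SU2*SU2} and yield $\BV=0$. Among the pairs from the $\slr 2$-factor $\{R,S,T\}$ I likewise expect $\BV(RR)=\BV(SS)=\BV(TT)=0$ and $\BV(ST)=0$, but now
\[
\BV(RS)=-(s_{14}X+s_{24}Y),\qquad \BV(RT)=-(t_{14}X+t_{24}Y)
\]
(up to an overall sign). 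The point is that the extra minus sign in $\lieb(ST)=-2R$, once it has been propagated through the Jacobi identities underlying Proposition \ref{proposition-SU2*SLR2}, destroys the cancellation in $\BV(RS)$ and $\BV(RT)$ that forced $B^\V$ to vanish identically in the compact case of Theorem \ref{theorem-SU2*SU2}.

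Granting these values, both conclusions are immediate. Each diagonal term $\BV(EE)$ is zero, so $\trace B^\V=0$ and $\F$ is minimal (whence the associated local submersion onto a surface is a harmonic morphism, by Theorem \ref{theo:B-E}). And $\F$ is totally geodesic precisely when $B^\V\equiv0$, which by the computation above holds if and only if $\BV(RS)$ and $\BV(RT)$ both vanish, that is, if and only if $s_{14}=s_{24}=t_{14}=t_{24}=0$. I foresee no conceptual difficulty here: the whole argument is a finite, routine computation, and the only real care needed lies in organising the $21$ basis pairs and in tracking how the sign in $\lieb(ST)=-2R$ propagates through the reduced bracket relations of Proposition \ref{proposition-SU2*SLR2} — it is exactly that sign which isolates $\BV(RS)$ and $\BV(RT)$ as the sole obstructions to total geodesy.
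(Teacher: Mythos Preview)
Your proposal is correct and matches the paper's own proof almost verbatim: the paper likewise invokes Theorem \ref{theorem-semisimple} for the Riemannian property, then evaluates $B^\V$ on the basis pairs using the brackets of Proposition \ref{proposition-SU2*SLR2}, finding every term zero except $\BV(RS)=-s_{14}X-s_{24}Y$ and $\BV(RT)=-t_{14}X-t_{24}Y$. The only cosmetic difference is that the paper first writes out $\trace B^\V$ via the Koszul formula before turning to the full list of pairs, whereas you read minimality off the vanishing diagonal terms directly; the content is identical.
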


\begin{proof}
It immediately follows from Theorem \ref{theorem-semisimple} that $\F$ is Riemannian.   Now employing the Koszul formula, for the Levi-Civita connection, we obtain  
\begin{eqnarray*}
& & \trace\, B^\V \\
&=& \H(\nab AA + \cdots + \nab TT)\\
&=& (\scalar{\nab AA}{X} + \cdots + \scalar{\nab TT}{X})X + (\scalar{\nab AA}{Y} + \cdots + \scalar{\nab TT}{Y})Y\\
&=& (\scalar{\lieb(XA)}{A} + \cdots + \scalar{\lieb(XT)}{T})X + (\scalar{\lieb(YA)}{A} + \cdots + \scalar{\lieb(YT)}{T})Y\\
&=& -(a\ss11 + b\ss12 + c\ss13 + r\ss14 + s\ss15 + t\ss16)X\\
& &\qquad - (a\ss21 + b\ss22 + c\ss23 + r\ss24 + s\ss25 + t\ss26)Y.
\end{eqnarray*}
Therefore, by the simiplifications made to the Lie bracket relations in Proposition \ref{proposition-SU2*SLR2}, we see that $\F$ is clearly minimal i.e. $\trace \,B^\V = 0$.

We now check when the foliation $\F$ is totally geodesic by using the same method as in the proof of Theorem \ref{theorem-SU2*SU2}.  We find that all but the two following evaluations to be zero.
\begin{eqnarray*}
\BV(RS) &=& -s\ss14 X - s\ss24 Y,\\
\BV(RT) &=& -t\ss14 X - t\ss24 Y.
\end{eqnarray*}
This proves the statement.
\end{proof}

\section{The case of $K=\SU 2\times\SO 2$ in $G^6$.}\label{section-SU2*SO2}

Let $(G,g)$ be a six-dimensional Riemannian Lie group and $K$ be its compact subgroup $\SU 2\times\SO 2$ equipped with its standard Riemannian metric. Further let $\F$ be the left-invariant foliation on $G$ generated by the Lie subalgebra $\k=\su 2\times\so 2$ of $\g$.  Let $\{A,B,C,T,X,Y\}$ be an orthonormal basis for the Lie algebra $\g$ of $G$ such that $\su 2$ is generated by $\{A,B,C\}$ and $\so 2$ by $\{T\}$, respectively. For the Lie algebra $\k$ of $K$ we have the following standard non-vanishing Lie bracket relations 
\begin{equation*}
	\left[A,B\right]=2\,C,\ \ \left[C,A\right]=2\,B,\ \  \left[B,C\right]=2\,A.
\end{equation*}

In this case the subalgebra $\k$ is not semisimple since $\so2$ is abelian, therefore we find that Remark 3.2 of \cite{Gud-Sve-6} only applies to $\su2$ i.e. the semisimple component of $\k$. Due to this the remaining Lie bracket relations, given by $\mathrm{ad}_\V(\H)$ and $\mathrm{ad}_\H(\H)$, are of the following form
\begin{eqnarray*}
	\left[A,X\right] &=& a_{11}A + a_{12}B + a_{13}C + a_{14}T,\\
	\left[A,Y\right] &=& a_{21}A + a_{22}B + a_{23}C + a_{24}T,\\
	\left[B,X\right] &=& b_{11}A + b_{12}B + b_{13}C + b_{14}T,\\
	\left[B,Y\right] &=& b_{21}A + b_{22}B + b_{23}C + b_{24}T,\\
	\left[C,X\right] &=& c_{11}A + c_{12}B + c_{13}C + c_{14}T,\\
	\left[C,Y\right] &=& c_{21}A + c_{22}B + c_{23}C + c_{24}T,\\
	\left[T,X\right] &=&  x_{1}X + y_{1}Y+t_{11}A+t_{12}B+t_{13}C+t_{14}T,\\
	\left[T,Y\right] &=&  x_{2}X + y_{2}Y+t_{21}A+t_{22}B+t_{23}C+t_{24}T,\\
	\left[X,Y\right] &=& {\rho}X+\theta_{1}A+\theta_{2}B+\theta_{3}C+\theta_{4}T.
\end{eqnarray*}
By a method similar to the one used in the case when $K$ is semisimple, we find that the Lie bracket relations involving $X$ and $Y$ can be greatly simplified.

\begin{proposition}\label{proposition-SU2*SO2}
Let $G$ be a six-dimensional Riemannian Lie group and $K$ its four-dimensional subgroup $\SU2\times\SO2$. Then the Lie bracket relations for the Lie algebra $\g$ of $G$ can be written as
$$\left[A,B\right]=2\,C,\ \ \left[C,A\right]=2\,B,\ \  \left[B,C\right]=2\,A,$$
$$\left[A,X\right] = -b_{11}B -c_{11}C,\ \ 
\left[A,Y\right] = -b_{21}B -c_{21}C,$$
$$\left[B,X\right] = b_{11}A -c_{12}C,\ \ 
\left[B,Y\right] = b_{21}A -c_{22}C,$$
$$\left[C,X\right] = c_{11}A + c_{12}B,\ \ 
\left[C,Y\right] = c_{21}A + c_{22}B,$$
\begin{eqnarray*}
\left[T,X\right] &=&  x_{1}\,X + y_{1}\,Y +\frac{1}{2}((-c\ss12x_1 - c\ss22y_1)A\\
& & +(c\ss11x_1 + c\ss21y_1)B+(-b\ss11x_1 - b\ss21y_1)C),\\
\left[T,Y\right] &=& x_2\,X - x_1\,Y + \frac{1}{2}((-c\ss12x_2 + c\ss22x_1)A\\
& & + (c\ss11x_2 -c\ss21x_1)B+(-b\ss11x_2 + b\ss21x_1)C) -\rho\,T,\\
\lieb(XY) &=& {\rho}X+\theta_{1}A+\theta_{2}B+\theta_{3}C+\theta_{4}T.
\end{eqnarray*}
Here the 11 different real coefficients $b_{jk},c_{jk},x_1,x_2,y_1,\rho,\theta_4$ are arbitrary and $\theta_1,\theta_2,\theta_3$ are given by
\begin{equation*}
\begin{pmatrix}
\theta_1\\
\theta_2\\
\theta_3
\end{pmatrix}
=\frac{1}{2}
\begin{pmatrix}
-{\rho}c\ss12 + b\ss11c\ss21 - b\ss21c\ss11\\
{\rho}c_{11} + b_{11}c_{22} - b_{21}c_{12}\\
-{\rho}b_{11} + c_{11}c_{22} - c_{12}c_{21}
\end{pmatrix}.
\end{equation*}
\end{proposition}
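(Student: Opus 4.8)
The plan is to follow the same strategy used in the proof of Proposition \ref{proposition-SU2*SU2}, exploiting the fact that $\su 2$ is semisimple while treating the abelian factor $\so 2$ separately. First I would recall that, by Remark 3.2 of \cite{Gud-Sve-6} applied to the semisimple summand $\su 2 = [\su 2,\su 2]$, the adjoint action of the vertical directions $A,B,C$ has no $\H$-component, and moreover the $\su 2$-part of $[A,X],[B,X],[C,X]$ (and their $Y$-analogues) is constrained exactly as in Section \ref{section-SU2-SU2}. Concretely, the Jacobi identities $\jid(ABX)=\jid(CAX)=\jid(BCX)=0$ and their $Y$-counterparts force $a\ss14=b\ss14=c\ss14=a\ss24=b\ss24=c\ss24=0$ together with the relations $a\ss11=b\ss12=c\ss13=0$, $(a\ss12,a\ss13,b\ss13)=(-b\ss11,-c\ss11,-c\ss12)$ and similarly for the second index, giving the displayed forms of $[A,X],[A,Y],[B,X],[B,Y],[C,X],[C,Y]$. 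These computations are literally those already carried out, since $T$ does not enter the triples $(A,B,C,X)$ or $(A,B,C,Y)$.

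Next I would impose the Jacobi identities that involve $T$. The identity $\jid(TXY)=0$, together with the $\so 2$ relations being trivial, controls the coefficients $x_1,x_2,y_1,y_2,\rho$ appearing in $[T,X],[T,Y],[X,Y]$; I expect it to yield $y_2=-x_1$ and $\theta_4$ arbitrary, and to fix the $X,Y$-components of $[T,Y]$ to be $x_2 X - x_1 Y$ as stated. Then the identities $\jid(ATX)=\jid(BTX)=\jid(CTX)=0$ and the $Y$-versions $\jid(ATY)=\jid(BTY)=\jid(CTY)=0$ relate the $\su 2$-components $t\ss11,t\ss12,t\ss13,t\ss21,t\ss22,t\ss23$ of $[T,X]$ and $[T,Y]$ to the parameters $b\ss11,c\ss11,c\ss12,b\ss21,c\ss21,c\ss22$ multiplied by $x_1,x_2,y_1$; carefully collecting coefficients of $A,B,C$ should reproduce the displayed formulas $\frac12(-c\ss12 x_1 - c\ss22 y_1)$, etc. The diagonal $T$-components $t\ss14,t\ss24$ I would expect to be forced (one of them to vanish, the other to be $-\rho$ as shown) by consistency of these same identities. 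Finally $\jid(AXY)=\jid(BXY)=\jid(CXY)=0$ fix $\theta_1,\theta_2,\theta_3$ exactly as in Proposition \ref{proposition-SU2*SU2}, since the $T$-terms drop out of the $A,B,C$-components.

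The main obstacle is bookkeeping rather than conceptual: there are more mixed Jacobi identities here because the non-semisimple generator $T$ appears in brackets with $X$ and $Y$ with nontrivial $X,Y$-components $x_1,y_1,x_2,y_2$, and one must verify that the whole overdetermined system is consistent and collapses to precisely the $11$ free parameters $b\ss11,b\ss21,c\ss11,c\ss12,c\ss21,c\ss22,x_1,x_2,y_1,\rho,\theta_4$ claimed. In particular the step of showing that the $T$-components of $[T,X]$ and $[T,Y]$ are determined (forcing one to be $0$ and the other $-\rho T$), and that $y_1$ survives as free while $y_2$ does not, requires disentangling $\jid(TXY)$ from the three identities $\jid(\bullet\,TX)$ and $\jid(\bullet\,TY)$ with $\bullet\in\{A,B,C\}$. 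Once that system is solved, substituting back into the remaining Jacobi identities should be automatically satisfied, and the stated normal form follows; for the fully written-out computation I would refer, as the authors do elsewhere, to \cite{Tur}.
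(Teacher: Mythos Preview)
Your plan is correct and follows exactly the strategy the paper indicates: the authors' proof is simply ``The arguments needed here are exactly the same as already provided in the proof of Proposition \ref{proposition-SU2*SU2}, for the details see \cite{Tur}.'' Your outline is in fact more explicit than what appears in the paper, and you have correctly isolated the new feature, namely that the abelian factor $\so 2$ forces one to analyse the extra Jacobi identities $\jid(ATX)$, $\jid(BTX)$, $\jid(CTX)$, their $Y$-analogues, and $\jid(TXY)$ in addition to the ones already handled in Section \ref{section-SU2-SU2}.

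One small refinement: the relations $y_2=-x_1$, $t_{14}=0$ and $t_{24}=-\rho$ do not drop out of $\jid(TXY)=0$ in isolation; that identity by itself only gives coupled relations such as $(x_1+y_2)\theta_4=\rho\,t_{14}$ and $t_{14}y_2=(t_{24}+\rho)y_1$. You obtain the clean constraints only after substituting the expressions for $t_{11},t_{12},t_{13},t_{21},t_{22},t_{23}$ coming from $\jid(\bullet\,TX)$ and $\jid(\bullet\,TY)$ into the $A,B,C$-components of $\jid(TXY)$ and comparing. You already flag exactly this disentangling as the main obstacle, so there is no gap in your plan, only a slight misattribution of which identity does which work.
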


\begin{proof}
The arguments needed here are exactly the same as already provided in the proof of Proposition \ref{proposition-SU2*SU2}, for the details see \cite{Tur}.
\end{proof}

\begin{theorem}\label{theorem-SU2*SO2}
Let $G$ be a six-dimensional Riemannian Lie group containing the subgroup $\SU2\times\SO2\,.$ And let $\V$ be the left-invariant distribution generated by the Lie algebra $\su2\times\so2$. The resulting foliation $\F$ tangent to $\V$ is
\begin{enumerate}
\item conformal if and only if $y_1+x_2=0$ and $x_1=0$,
\item Riemannian if and only if it is conformal,
\item minimal if and only if $\rho=0$, and
\item totally geodesic if and only if $$\rho = y_1b_{11}=y_1b_{21}= y_1c_{11}=y_1c_{12}=y_1c_{21}=y_1c_{22} = 0\,$$
\end{enumerate}
where the parameters $b_{11}\,, b_{21}\,, c_{11}, c_{12}\,, c_{21}\,, c_{22}\,, x_1\,, x_2\,, y_1\,,$ and $\rho$ are defined in Proposition \ref{proposition-SU2*SO2}.
\end{theorem}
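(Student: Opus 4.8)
The plan is to run every computation through the Koszul formula for the Levi-Civita connection of a left-invariant metric. For left-invariant fields all inner products of basis vectors are constant, so the formula reduces to
\[
2\,\scalar{\nab UV}{W}=\scalar{\lieb(UV)}{W}-\scalar{\lieb(VW)}{U}+\scalar{\lieb(WU)}{V}.
\]
I would substitute the bracket relations of Proposition \ref{proposition-SU2*SO2} into this, writing $\V=\span\{A,B,C,T\}$ and $\H=\span\{X,Y\}$, and impose no side hypothesis, so that each of the four equivalences is established outright.

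For (1) I first compute the horizontal second fundamental form on the orthonormal frame $\{X,Y\}$. Since $[A,X],\dots,[C,Y]$ have no $\H$- or $T$-component, only the $T$-rows of $[T,X]$ and $[T,Y]$ contribute, giving $\BH(XX)=x_1\,T$, $\BH(YY)=-x_1\,T$ and $\BH(XY)=\tfrac12(x_2+y_1)\,T$. Conformality is the condition $\BH=g\otimes V$, i.e. $\BH(XX)=\BH(YY)$ and $\BH(XY)=0$; comparing the three expressions gives exactly $x_1=0$ and $x_2+y_1=0$, which is (1). Part (2) then follows at once: on the conformal locus the common value is $V=\BH(XX)=x_1\,T=0$, so $\F$ is Riemannian, while conversely a Riemannian foliation is conformal with $V=0$ by definition.

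For (3) I evaluate $\trace B^\V=\H(\nab AA+\nab BB+\nab CC+\nab TT)$ directly from the relation $\scalar{\nab EE}{X}=\scalar{\lieb(XE)}{E}$ and its $Y$-analogue. The three $\su2$-diagonal terms vanish because the brackets of $A,B,C$ with $X,Y$ carry no $A$-, $B$- or $C$-component respectively, and the sole survivor is $\BV(TT)=\rho\,Y$, coming from the $-\rho\,T$ summand of $[T,Y]$. Thus $\trace B^\V=\rho\,Y$ with no restriction on the other parameters, so $\F$ is minimal if and only if $\rho=0$; I would stress that this uses none of the relations from part (1), so it is genuinely unconditional.

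For (4) I would compute the full tensor
\[
\BV(EF)=\tfrac12\big((\scalar{\lieb(XE)}{F}+\scalar{\lieb(XF)}{E})X+(\scalar{\lieb(YE)}{F}+\scalar{\lieb(YF)}{E})Y\big)
\]
on every pair from $\{A,B,C,T\}$. The pairs internal to $\su2$ vanish exactly as in the proof of Theorem \ref{theorem-SU2*SU2}, and $\BV(TT)=\rho\,Y$ as above, so $B^\V=0$ forces $\rho=0$. The only remaining data are the three mixed terms $\BV(AT)$, $\BV(BT)$, $\BV(CT)$, whose $X$- and $Y$-coefficients are precisely the $A,B,C$-entries of $[T,X]$ and $[T,Y]$; setting them to zero produces three homogeneous $2\times2$ systems in the pairs $(c_{12},c_{22})$, $(c_{11},c_{21})$ and $(b_{11},b_{21})$, all with the common coefficient matrix $\left(\begin{smallmatrix}x_1 & y_1\\ x_2 & -x_1\end{smallmatrix}\right)$. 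I expect this to be the main obstacle. On the non-degenerate locus $x_1^2+x_2y_1\neq 0$ the systems force $b_{11}=b_{21}=c_{11}=c_{12}=c_{21}=c_{22}=0$, so all six products in the statement vanish automatically; the delicate case is the degenerate locus, where one must reconcile the solution set with the stated list $\rho=y_1b_{11}=y_1b_{21}=y_1c_{11}=y_1c_{12}=y_1c_{21}=y_1c_{22}=0$. The transparent way to close this is to feed the relations $x_1=0$, $x_2=-y_1$ of part (1) into the three systems, whereupon each scalar equation collapses to exactly one product $y_1b_{jk}$ or $y_1c_{jk}$; making this reduction precise, and pinning down exactly where conformality is needed, is the step I would treat most carefully.
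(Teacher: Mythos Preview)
Your approach is essentially the same as the paper's: the paper computes $B^\H$ on $\{X,Y\}$ to obtain (1)--(2), then evaluates $B^\V$ on all pairs from $\{A,B,C,T\}$ and reads off (3)--(4) directly. The only point worth flagging is that the paper silently substitutes the conformality relations $x_1=0$, $x_2=-y_1$ from (1) \emph{before} computing $\BV(AT)$, $\BV(BT)$, $\BV(CT)$, obtaining e.g.\ $\BV(AT)=\tfrac{y_1}{4}(c_{22}X-c_{12}Y)$ immediately; so your ``delicate case'' analysis of the coefficient matrix is unnecessary, and the reduction you describe in your last sentence is exactly what the paper does without comment.
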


\begin{proof}
We begin by noting that since $K$ is not semisimple the condition of conformality is not immediately given. Evaluating the second fundamental form $B^\H$ of the horizontal distribution we find that
$$\BH(XX) = x_1\,T,\ \ \BH(YY) = -x_1\,T,$$
$$\BH(XY) = \frac{1}{2}(y_1+x_2)\,T.$$
From this we determine that $\F$ is conformal if and only if $y_1+x_2=0$ and $x_1 = 0$.  This is also equivalent to $\F$ being Riemannian.

Next we check whether the foliation $\F$ is totally geodesic or not, by evaluating the second fundamental form of the vertical distribution $\V$.  Here we get 
$$\BV(AA) = 0,\ \ \BV(AB) = 0,\ \ \BV(AC) = 0,$$
$$\BV(AT) = \frac{y_1}{4}(c\ss22\,X - c\ss12\,Y),\ \ 
\BV(BB) = 0,\ \ \BV(BC) = 0,$$
$$\BV(BT) = \frac{y_1}{4}(-c\ss21\,X + c\ss11\,Y),\ \ 
\BV(CC) = 0,$$
$$\BV(CT)= \frac{y_1}{4}(b\ss21\,X - b\ss11\,Y),\ \ 
\BV(TT)= \rho\,Y.$$
From these results it is immediately clear that $\F$ is minimal if and only if $\rho = 0$. In addition, we see that the foliation is totally geodesic if and only if 
$$\rho = y_1b_{11}=y_1b_{21}= y_1c_{11}=y_1c_{12} =y_1c_{21}=y_1c_{22} = 0.$$
\end{proof}

Proposition \ref{proposition-SU2*SO2} and Theorem \ref{theorem-SU2*SO2} provide a new $8$-dimensional family of $6$-dimensional Lie groups carrying a conformal foliation with minimal leaves of codimension two.  This is generated by the  $8$ parameters $b_{11}$, $b_{21}$, $c_{11}$, $c_{21}$, $c_{12}$, $c_{22}$, $y_1$, $\theta_4$ which are defined in the statement of Proposition \ref{proposition-SU2*SO2}.

\section{The case of $K=\SLR 2\times\SO 2$ in $G^6$.}\label{section-SLR2*SO2}

Let $(G,g)$ be a six-dimensional Riemannian Lie group and $K$ be its non-compact subgroup $\SLR 2\times\SO 2$ equipped with its standard Riemannian metric. Further let $\F$ be the left-invariant foliation on $G$ generated by the Lie subalgebra $\k=\slr 2\times\so 2$ of $\g$.  Let $\{A,B,C,T,X,Y\}$ be an orthonormal basis for the Lie algebra $\g$ of $G$ such that $\slr 2$ is generated by $\{A,B,C\}$ and $\so 2$ by $\{T\}$, respectively. For the Lie algebra $\k$ of $K$ we have the following standard non-vanishing Lie bracket relations 
\begin{equation*}
\left[A,B\right]=2\,C,\ \ \left[C,A\right]=2\,B,\ \  \left[B,C\right]=-2\,A.
\end{equation*}

The subgroup $K=\SLR 2\times\SO 2$ is not semisimple.  Due to this the remaining bracket relations, given by $\mathrm{ad}_\V(\H)$ and $\mathrm{ad}_\H(\H)$, are of the form
\begin{eqnarray*}
	\left[A,X\right] &=& a_{11}A + a_{12}B + a_{13}C + a_{14}T,\\
	\left[A,Y\right] &=& a_{21}A + a_{22}B + a_{23}C + a_{24}T,\\
	\left[B,X\right] &=& b_{11}A + b_{12}B + b_{13}C + b_{14}T,\\
	\left[B,Y\right] &=& b_{21}A + b_{22}B + b_{23}C + b_{24}T,\\
	\left[C,X\right] &=& c_{11}A + c_{12}B + c_{13}C + c_{14}T,\\
	\left[C,Y\right] &=& c_{21}A + c_{22}B + c_{23}C + c_{24}T,\\
	\left[T,X\right] &=&  x_{1}X + y_{1}Y+t_{11}A+t_{12}B+t_{13}C+t_{14}T,\\
	\left[T,Y\right] &=&  x_{2}X + y_{2}Y+t_{21}A+t_{22}B+t_{23}C+t_{24}T,\\
	\left[X,Y\right] &=& {\rho}X+\theta_{1}A+\theta_{2}B+\theta_{3}C+\theta_{4}T.
\end{eqnarray*}

Using exactly the same method as the one for  Proposition \ref{proposition-SU2*SO2}, the remaining Lie bracket relations involving $X$ and $Y$ can be simplified as follows.

\begin{proposition}\label{proposition-SLR2*SO2}
Let G be a six-dimensional Riemanian Lie group and $K$ be the four-dimensional subgroup $\SLR2\times\SO2$ of $G$. Then the Lie brackets for $\g$ take the form
\begin{equation*}
\left[A,B\right]=2\,C,\ \ \left[C,A\right]=2\,B,\ \  \left[B,C\right]=-2\,A.
\end{equation*}
$$\left[A,X\right] = b_{11}B + c_{11}C,\ \ 
\left[A,Y\right] = b_{21}B + c_{21}C,$$
$$\left[B,X\right] = b_{11}A -c_{12}C,\ \ 
\left[B,Y\right] = b_{21}A -c_{22}C,$$
$$\left[C,X\right] = c_{11}A + c_{12}B,\ \ 
\left[C,Y\right] = c_{21}A + c_{22}B,$$
\begin{eqnarray*}
\left[T,X\right] &=&  x_1\,X + y_1\,Y +\frac{1}{2}((-c\ss12x_1 -c\ss22y_1)A\\
& &+(-c\ss11x_1 -c\ss21y_1)B+(b\ss11x_1 + b\ss21y_1)C),\\
\left[T,Y\right] &=& x_2\,X - x_1\,Y +\frac{1}{2}((c\ss22x_1 - c\ss12x_2)A\\
	& &+(c\ss21x_1 - c\ss11x_2)B+(-b\ss21x_1 + b\ss11x_2)C) - \rho\,T,\\
	\lieb(XY) &=& {\rho}X + \theta_1 A + \theta_2 B + \theta_3 C + \theta_4 T,
\end{eqnarray*}
Here the 11 different real coefficients $b_{jk},c_{jk},x_1,x_2,y_1,\rho,\theta_4$ are arbitrary and $\theta_1,\theta_2,\theta_3$ are given by
\begin{equation*}
\begin{pmatrix}
\theta_1\\
\theta_2\\
\theta_3
\end{pmatrix}
=\frac{1}{2}\begin{pmatrix}
-{\rho}c\ss12 - b\ss11c\ss21 + b\ss21c\ss11\\
-{\rho}c\ss11 - b\ss11c\ss22 + b\ss21c\ss12\\
{\rho}b\ss11 - c\ss11c\ss22 + c\ss12c\ss21
\end{pmatrix}.
\end{equation*}
\end{proposition}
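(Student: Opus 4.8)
The plan is to follow, essentially verbatim, the strategy used in the proof of Proposition \ref{proposition-SU2*SU2}, adjusted to the structure constants of $\slr 2$. The starting point is the general form of the bracket relations displayed just before the statement. Since the factor $\slr 2$ spanned by $\{A,B,C\}$ is semisimple, Remark 3.2 of \cite{Gud-Sve-6}, as already used in the proof of Theorem \ref{theorem-semisimple}, tells us that $\ad_X$ and $\ad_Y$ map the span of $\{A,B,C\}$ into $\k=\slr 2\oplus\so 2=\span\{A,B,C,T\}$, whereas, the abelian factor $\so 2$ being non-semisimple, the brackets $[T,X]$, $[T,Y]$ and $[X,Y]$ are allowed to retain $\H$-components. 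I would also record that $[A,T]=[B,T]=[C,T]=0$, because $\k$ is a direct product.

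First I would impose the Jacobi identity on the triples drawn from $\{A,B,C,X\}$, that is, evaluate $\jid(ABX)$, $\jid(CAX)$, $\jid(BCX)$ and then the $Y$-analogues. Exactly as in Proposition \ref{proposition-SU2*SU2} this forces the $\so 2$-components $a\ss14=b\ss14=c\ss14=0$ (and the $Y$-version), and the resulting small linear system yields the diagonal vanishing $a\ss11=b\ss12=c\ss13=0$ together with the identifications $a\ss12=b\ss11$, $a\ss13=c\ss11$, $b\ss13=-c\ss12$ (and their $Y$-analogues). The one place where this genuinely differs from Section \ref{section-SU2-SU2} is the sign flip coming from $[B,C]=-2\,A$ in $\slr 2$: it turns the relations $a\ss12=-b\ss11$, $a\ss13=-c\ss11$ of the $\SU 2\times\SU 2$ case into $a\ss12=b\ss11$, $a\ss13=c\ss11$, which is precisely the origin of the $b_{11}B+c_{11}C$ in $[A,X]$ and of the altered sign pattern in the $\theta$-formulas.

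Next I would bring in the generator $T$: evaluating $\jid(ATX)$, $\jid(BTX)$, $\jid(CTX)$ (using $[A,T]=[B,T]=[C,T]=0$) and the $Y$-analogues expresses each $t\ss1j$ and $t\ss2j$ linearly in the $b_{ij},c_{ij}$ already determined and in the free parameters $x_1,x_2,y_1$, and simultaneously forces the coefficient of $Y$ in $[T,Y]$ to be $-x_1$ and the $T$-coefficient of $[T,X]$ to vanish. Finally, $\jid(XYA)$, $\jid(XYB)$, $\jid(XYC)$ produce the closed expressions for $\theta_1,\theta_2,\theta_3$ displayed in the statement, while $\jid(TXY)$ pins down the remaining $-\rho\,T$ term in $[T,Y]$; the parameters $\theta_4$ and $\rho$ stay unconstrained. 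Collecting these reductions gives the asserted normal form with the eleven genuinely free coefficients $b_{jk},c_{jk},x_1,x_2,y_1,\rho,\theta_4$.

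The whole computation is routine; the only real care required is sign-bookkeeping in the $\slr 2$-brackets, which accounts for every sign discrepancy with Propositions \ref{proposition-SU2*SU2} and \ref{proposition-SU2*SO2}, together with checking that the various Jacobi identities are mutually consistent rather than over-determined. For the complete calculation one may, exactly as for Propositions \ref{proposition-SU2*SLR2} and \ref{proposition-SU2*SO2}, consult \cite{Tur}.
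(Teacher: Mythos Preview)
Your proposal is correct and follows essentially the same approach as the paper, whose proof of this proposition consists of a single sentence referring back to the argument for Proposition~\ref{proposition-SU2*SU2} and to \cite{Tur} for details. Your write-up is in fact considerably more explicit than what the paper provides; the only minor caution is that the constraints $y_2=-x_1$, $t_{14}=0$ and $t_{24}=-\rho$ are not fully isolated by the Jacobi identities $\jid(ATX)$, $\jid(BTX)$, $\jid(CTX)$ and their $Y$-analogues alone, but emerge once these are combined with $\jid(TXY)$ --- this is a matter of attribution rather than substance, and the route is the intended one.
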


\begin{proof}
The arguments needed here are exactly the same as already provided in the proof of Proposition \ref{proposition-SU2*SU2}, for the details see \cite{Tur}.
\end{proof}

\begin{theorem}\label{theorem-SLR2*SO2}
Let $G$ be a six-dimensional Riemannian Lie group containing the subgroup $\SLR2\times\SO2\,.$ And let $\V$ be the left-invariant distribution generated by the Lie algebra $\slr2\times\so2$. The resulting foliation $\F$ tangent to $\V$ is
\begin{enumerate}
\item conformal if and only if $y_1+x_2=0$ and $x_1=0$,
\item Riemannian if and only if it is conformal,
\item minimal if and only if $\rho=0$, and
\item totally geodesic if and only if $$\rho = b_{11}=b_{21}= c_{11}=c_{12}=c_{21}=c_{22} = 0\,,$$
\end{enumerate}
where the parameters $b_{11}\,, b_{21}\,, c_{11}, c_{12}\,, c_{21}\,, c_{22}\,, x_1\,, x_2\,, y_1\,,$ and $\rho$ are defined in Proposition \ref{proposition-SLR2*SO2}.
\end{theorem}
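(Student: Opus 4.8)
The plan is to follow exactly the same computational strategy used in the proof of Theorem \ref{theorem-SU2*SO2}, adapted to the bracket relations of Proposition \ref{proposition-SLR2*SO2}. First I would compute the second fundamental form $B^\H$ of the horizontal distribution on the basis $\{X,Y\}$ using $B^\H(E,F)=\tfrac12\,\V(\nabla_EF+\nabla_FE)$ together with the Koszul formula, which for left-invariant fields reduces to inner products of brackets. Since $\V$ is spanned by $\{A,B,C,T\}$, only the $\V$-components of $\nabla_XX$, $\nabla_XY$, $\nabla_YY$ matter; from the relations $[T,X]=x_1X+y_1Y+\cdots$ and $[T,Y]=x_2X-x_1Y+\cdots$ one reads off that $\BH(XX)=x_1\,T$, $\BH(YY)=-x_1\,T$ and $\BH(XY)=\tfrac12(y_1+x_2)\,T$. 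Conformality means $B^\H=g\otimes V$ for some $V\in\V$; evaluating on the orthonormal pair $\{X,Y\}$ this forces $\BH(XX)=\BH(YY)=V$ and $\BH(XY)=0$, hence $x_1=0$ and $y_1+x_2=0$. Conversely these conditions give $B^\H=g\otimes(x_1T)=0$, so $\F$ is then Riemannian; this proves (1) and (2) simultaneously, and shows in fact that conformal already forces $V=0$.

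For parts (3) and (4) I would compute $B^\V(E,F)=\H(\nabla_EF)=\tfrac12\,\H(\nabla_EF+\nabla_FE)$ on all pairs of basis vectors of $\V$, again via Koszul; the $\H$-component is extracted by pairing with $X$ and $Y$, so $\BV(E,F)=\tfrac12\big((\scalar{[X,E]}{F}+\scalar{[X,F]}{E})X+(\scalar{[Y,E]}{F}+\scalar{[Y,F]}{E})Y\big)$. Using the simplified brackets of Proposition \ref{proposition-SLR2*SO2}, essentially all pairs among $A,B,C$ and the mixed pairs $A{-}T$, $B{-}T$, $C{-}T$ will contribute terms proportional to $y_1$ times the $b,c$ coefficients (mirroring the $\frac{y_1}{4}$ expressions in Theorem \ref{theorem-SU2*SO2}), while $\BV(TT)$ will equal $\rho\,Y$ coming from the $-\rho\,T$ term in $[T,Y]$. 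Taking the trace, all the $A,B,C$ diagonal contributions vanish because $\su 2$ respectively $\slr 2$ embeds with no horizontal component in its own bracket, and the only surviving term is $\BV(TT)=\rho\,Y$; hence $\trace B^\V=\rho\,Y$ and minimality is equivalent to $\rho=0$, giving (3). For total geodesy we need every $\BV(E,F)=0$, which from the explicit list forces $\rho=0$ together with $y_1b_{11}=y_1b_{21}=y_1c_{11}=y_1c_{12}=y_1c_{21}=y_1c_{22}=0$; but in this non-compact case one should check whether the coefficients appearing are exactly these products — the statement lists $\rho=b_{11}=b_{21}=c_{11}=c_{12}=c_{21}=c_{22}=0$, so presumably the mixed-pair second fundamental forms here come out proportional to the bare $b,c$ coefficients rather than to $y_1$ times them, which I would verify directly from the bracket formulas.

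The main obstacle I anticipate is purely bookkeeping: one must carefully recompute each $\BV(E,F)$ from the Proposition \ref{proposition-SLR2*SO2} brackets rather than copying the $\SU2\times\SO2$ answers, because the sign change $[B,C]=-2A$ and the altered signs in $[A,X],[A,Y]$ and in the $[T,X],[T,Y]$ coefficients propagate through, and in particular this is what determines whether the total-geodesy condition is $y_1 b_{jk}=\rho=0$ (as for $\SU 2\times\SO 2$) or the stronger $b_{jk}=c_{jk}=\rho=0$ as stated. Concretely I would tabulate $\BV(AT)$, $\BV(BT)$, $\BV(CT)$ and $\BV(TT)$ — these are the only potentially nonzero ones — read off the vanishing conditions, and confirm they match the claim; everything else ($\BV$ on pairs within $\{A,B,C\}$) vanishes identically just as in Theorem \ref{theorem-SU2*SU2}. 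No conceptual difficulty is expected beyond this verification.
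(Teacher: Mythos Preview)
Your handling of parts (1)--(3) is correct and coincides with the paper's argument. The gap is in part (4): you assert that $B^\V$ vanishes on all pairs within $\{A,B,C\}$ ``just as in Theorem \ref{theorem-SU2*SU2}'' and that only $\BV(AT),\BV(BT),\BV(CT),\BV(TT)$ can be nonzero. This is exactly where the $\slr 2$ case diverges. Because Proposition \ref{proposition-SLR2*SO2} has $[A,X]=+b_{11}B+c_{11}C$ (sign opposite to the $\su 2$ case) while $[B,X]=b_{11}A-c_{12}C$ keeps its sign, the symmetrisation $\scalar{[X,A]}{B}+\scalar{[X,B]}{A}=-b_{11}-b_{11}$ no longer cancels. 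The paper finds
\[
\BV(AB)=-b_{11}X-b_{21}Y,\qquad \BV(AC)=-c_{11}X-c_{21}Y,\qquad \BV(BC)=0,
\]
and it is these internal $\slr 2$ terms that force $b_{11}=b_{21}=c_{11}=c_{21}=0$ without any $y_1$ factor. The $T$-mixed terms $\BV(AT),\BV(BT),\BV(CT)$ still carry the $\tfrac{y_1}{4}$ prefactor exactly as in the $\su 2\times\so 2$ case, so the bare $b,c$ conditions do \emph{not} arise from the mixed pairs as you conjectured. This is the same mechanism already visible in Theorem \ref{theorem-SU2*SLR2}, where the only nonvanishing entries $\BV(RS)$ and $\BV(RT)$ sit inside the $\slr 2$ factor.

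In short, your computational plan would eventually catch this once you actually tabulate $\BV(AB)$ and $\BV(AC)$, but your stated expectation about which entries vanish is wrong, and with it your diagnosis of where the stronger totally-geodesic condition originates. (Incidentally, the paper's own proof records the residual conditions as $y_1c_{12}=y_1c_{22}=0$, coming from $\BV(AT)$ and $\BV(BT)$, which is marginally weaker than the $c_{12}=c_{22}=0$ in the theorem statement.)
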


\begin{proof}
We start by determining when the foliation $\F$ is conformal. Calculating the second fundamental form of the horizontal distribution $\H$ we find that 
$$\BH(XX) = x_1\, T,\ \ \BH(YY) = -x_1\, T,$$
$$\BH(XY) = \frac{1}{2}(y_1+x_2)\, T.$$
From this it is clear that $\F$ is conformal exactly when $y_1+x_2=0$ and $x_1 = 0$.  This is also equivalent to $\F$ being Riemannian.

Evaluating the second fundamental form $B^\V$ for the basis elements in the vertical distribution $\V$ we observe that 
$$\BV(AA) = 0,\ \ \BV(AB) = -b\ss11\,X - b\ss21\,Y,$$
$$\BV(AC) = -c\ss11\,X - c\ss21\,Y,\ \ 
\BV(AT) = \frac{y_1}{4}(c\ss22\,X - c\ss12\,Y),$$
$$\BV(BB) = 0,\ \ \BV(BC) = 0,\ \ 
\BV(BT) = \frac{y_1}{4}(c\ss21\,X - c\ss11\,Y),$$
$$\BV(CC) = 0,\ \ \BV(CT) = \frac{y_1}{4}(-b\ss21\,X + b\ss11\,Y),\ \ \BV(TT) = \rho\,Y.$$
It is immediately apparent that $\F$ is minimal if and only if $\rho=0$. In addition to this, we find that $\F$ is totally geodesic exactly when $$\rho = b_{11} = b_{21} =c_{11} = c_{21} = y_1c_{22} = y_1c_{12} = 0.$$
\end{proof}

Proposition \ref{proposition-SLR2*SO2} and Theorem \ref{theorem-SLR2*SO2} provide a new $8$-dimensional family of $6$-dimensional Lie groups carrying a conformal foliation with minimal leaves of codimension two. This is generated by the $8$ parameters $b_{11}$,  $b_{21}$, $c_{11}$, $c_{21}$, $c_{12}$, $c_{22}$, $y_1$, $\theta_4$ which are defined in the statement of Proposition \ref{proposition-SLR2*SO2}.

\section{Acknowledgements}

The authors would like to thank the referee for useful comments on the presentation.
\vskip .2cm
The authors would like to thank Victor Ottosson for providing us with his Maple programmes from \cite{Ott}.  These have turned out to be very useful for checking our own calculations.
\vskip .2cm
The first author would like to thank the Department of Mathematics at Lund University for its great hospitality during her time there as a postdoc.

\end{document}